\newtheorem{theorem}{Theorem}
\newtheorem{lemma}{Lemma}
\newcommand{\SumOverN}{\sum_{i=1}^n}
\newcommand{\ObsRes}{\textbf{y}_i}
\newcommand{\ObsCov}{\textbf{x}_i}
\newcommand{\RegCoe}{\textbf{b}}
\newcommand{\LogCoe}{\boldsymbol{\gamma}}
\newcommand{\vect}{\emph{vec}}
\newcommand{\IdenMat}{\textbf{I}}
\newcommand{\Cov}{\mathbb{C}\text{ov}}
\newcommand{\Norm}[1]{ \left\| #1 \right\| }
\newcommand{\Parentheses}[1]{ \left( #1 \right) }
\newcommand{\Curlybracket}[1]{ \left\{ #1 \right\} }
\newcommand{\Span}{\text{span}}
\title{Doubly robust estimation with functional outcomes missing at random}
\author{Xijia Liu\footnote{Correponding author: xijia.liu@umu.se}, Kreske Ecker, Lina Schelin and Xavier de Luna \\Department of Statistics/USBE, Ume\aa\ University, Ume\aa\ , Sweden}
\date{}
\begin{document}
\maketitle
\begin{abstract}
    We present and study semi-parametric estimators for the mean of functional outcomes in situations where some of these outcomes are missing, and covariate information is available on all units. 
    Assuming that the missingness mechanism depends only on the covariates (missing at random assumption), we present two estimators for the functional mean parameter, using working models for the functional outcome given the covariates, and the probability of missingness given the covariates. 
    We contribute by establishing that both these estimators have Gaussian processes as limiting distributions and explicitly give their covariance functions. 
    One of the estimators is doubly robust in the sense that the limiting distribution holds whenever at least one of the nuisance models is correctly specified. 
    These results allow us to present simultaneous confidence bands for the mean function with asymptotically guaranteed coverage. 
    A Monte Carlo study shows the finite sample properties of the proposed functional estimators and their associated simultaneous inference. 
    The use of the method is illustrated in an application where the mean of counterfactual outcomes is targeted.
\end{abstract}
\textbf{Keywords:}
    Causal inference;
    functional data analysis;
    lifetime income trajectory;
    simultaneous confidence bands

\section{Introduction}\label{sec1}
        We propose and study semi-parametric estimators of the mean of functional outcomes when these functions are not observed for all individuals in the study. 
        Functional outcomes arise in many different situations, e.g. when studying human movement \citep{dannenmaier2020application} or lifetime cumulative income trajectories \citep{ecker2022regional}, \citep[see also][for further examples]{ullah2013applications}.
        Missingness of the outcomes can occur due to e.g., non-response (in a survey context) or drop-out (for follow-up studies). 
        Missingness can also be a consequence of considering the potential outcome framework for causal inference allowing us to answer counterfactual questions; see \cite{holland86}, \cite{missingcausal}, and Section \ref{sec:app} for an application. 
        We focus here on the case where some baseline covariates are observed for all units, and the missingness mechanism is governed only by these covariates. This is referred to as the missing at random assumption \cite[see e.g.,][]{tsiatis2006semiparametric, little2019statistical}. 
        Because the entire functional outcome is missing for some units, our context is different from the literature on fragmented functional data, where it is assumed that functional objects may only be observed on some subset(s) of their domain. 
        This literature does typically not consider access to covariates and instead aims to reconstruct the missing sections using only information from the observed functional outcomes \cite{delaigle2016approximating, kraus2019inferential, kneip2020optimal}.

        There is a large body of literature on semi-parametric estimators of the mean when a scalar outcome is missing at random \cite[e.g.,][]{tsiatis2006semiparametric, little2019statistical, molenberghs2015handbook}. 
        Popular estimators include outcome regression (also called regression imputation) estimators \cite[e.g.,][]{Tan,genback2019causal} and augmented inverse probability weighted estimators \cite[also called doubly robust estimators; e.g.,][]{robins1994estimation, bang2005doubly, kang2007demystifying}. 
        The outcome regression (OR) method involves modelling the relationship between the outcome and the observed covariates and imputing the missing outcomes with predictions from this model. 
        The doubly robust (DR) estimator can be interpreted as a correction of the OR estimator, using weights that are based on modelling the missingness mechanism as a function of the covariates. 
        The DR estimator is locally efficient (it attains the semi-parametric efficiency bound if both models are correct) and doubly robust (consistent and asymptotically normal if only one of the two models is correct;  \cite[e.g.,][]{Seaman:2018}).

        In this paper, we contribute by studying the OR and DR estimators in the context of functional outcomes. 
        We provide their limiting distributions by considering first a discretisation of the functional domain, and showing that the resulting multivariate, but finite-dimensional, estimator is asymptotically multivariate normal. 
        By then showing the tightness of the functional estimator, we obtain a Gaussian process as the limiting distribution of the (infinite dimensional) functional estimator of the mean. 
        The double robustness property holds for the functional DR estimator, where the limiting process is Gaussian and $\sqrt{n}$-consistent even when only one of the nuisance models is correctly specified. 
        
        These asymptotic properties allow us to use recent results in \cite{Liebl} to propose simultaneous confidence bands for the functional mean, providing a given coverage level over the entire domain. 
        At the same time, these bands also allow for local inferences and the interpretation of local significance levels over user-determined partitions of the domain. 
        Up to our knowledge, there are only few earlier peer-reviewed publications on doubly robust estimators in the present context. 
        \cite{Belloni}, in the causal inference context, study doubly robust estimators and obtain their limiting distribution, but without an expression for the covariance function. 
        Instead, they propose a resampling scheme to obtain valid inference. 
        In another related contribution, \cite{ecker2023causal} study an OR estimator for a functional causal effect and provides its finite sample distribution, albeit under stricter distributional assumptions about the underlying data. 
        
        The paper is organised as follows. 
        Section 2 presents the estimators and their asymptotic properties. 
        In Section 3, a Monte Carlo study illustrates the finite sample properties of the estimators, and in particular shows that the simultaneous confidence bands have the expected empirical coverages in a range of situations. 
        Section 4 presents an application where we study a counterfactual question: what would be the average life income trajectory for the Swedish cohort born in 1954 if all had been living at age 20 in a large labour market region? A discussion concludes the paper in Section 5.

    \section{Theory and method}
        \subsection{Model and semi-parametric estimation}
        Suppose we have a random sample, $(\mathcal{Y}_i, \ObsCov, Z_i)_{i=1}^n$, from the joint distribution of $(\mathcal{Y}, \textbf{x}, Z)$, where the outcome variable, $\mathcal{Y}$, is a $L^2([0,1], \mathbb{R})$ valued random element, and the set of covariates, $\textbf{x}$, is a $p \times 1$ random vector with mean vector $\boldsymbol{\mu}_x$ and covariance matrix $\boldsymbol{\Sigma}_x$. 
        Further, $Z$ is a binary variable indicating whether the outcome $\mathcal{Y}$ is observed, in which case $Z=1$, or missing ($Z=0$). 
        We assume throughout that $\Pr[ Z_i=1 | \ObsCov, \mathcal{Y}_i] = \Pr[ Z_i=1 | \ObsCov ]$, that is, the missingness mechanism is ignorable (also referred to as the outcome being missing at random, \cite[][Chap 6]{tsiatis2006semiparametric}). 
        The parameter of interest is the functional mean $\mu_{y} = \mathbb{E}\Parentheses{\mathcal{Y}}$.

        For the outcome variable, we specify a working model as a multiple-functional regression:
        \begin{equation}\label{equ:regmod1}
            \mathcal{Y}_i = \ObsCov^{\top}\boldsymbol{\beta} + \epsilon_i,
        \end{equation}
        where $\boldsymbol{\beta} = (\beta_1, \dots, \beta_p)$ and $\beta_j$s are non-stochastic functions in $L^2([0,1], \mathbb{R})$, and $\epsilon_i$ is a $L^2([0,1], \mathbb{R})$ valued random element with zero mean and with $\sigma_{\epsilon}$ as its covariance function.    
        Furthermore, a working model for $\Pr[ Z_i=1 | \ObsCov ]$ is specified as the logistic regression with the conditional probability of being observed in the sample:
            \begin{equation}\label{equ:ps}
               \tau(\ObsCov^{\top} \LogCoe),
            \end{equation}
        where $\tau(s) = (1+e^{-s})^{-1}$. 
        The outcome regression (OR) imputed estimator of the expected value of the functional outcome variables, $\mu_{y}$, based on model \eqref{equ:regmod1} is defined as 
            \begin{align}\label{equ:OR}
                \hat{\mu}_{OR} = n^{-1}\SumOverN \ObsCov^{\top}\hat{\boldsymbol{\beta}},
            \end{align}
        where $\hat{\boldsymbol{\beta}}$ is the functional estimation
            $
                \Parentheses{ \mathbf{X}^{\top}\mathbf{X} }^{-1} \mathbf{X}^{\top}\boldsymbol{\mathcal{Y}},
            $
        with 
        $\boldsymbol{\mathcal{Y}} = \Parentheses{\mathcal{Y}_1, \dots, \mathcal{Y}_n}^{\top}$, and $\mathbf{X} = \Parentheses{Z_1\mathbf{x}_1, \dots, Z_n\mathbf{x}_n}^{\top}$.
        Based on the two working models \eqref{equ:regmod1} and \eqref{equ:ps}, the doubly robust estimator of ${\mu}_{y}$, is defined as
            \begin{align}\label{equ:DR}
                \hat{\mu}_{DR} = n^{-1} \SumOverN \left( \ObsCov^{\top}\hat{\boldsymbol{\beta}} + \frac{Z_i}{\tau(\ObsCov^{\top}\hat{\LogCoe})}\Parentheses{\mathcal{Y}_i -\ObsCov^{\top}\hat{\boldsymbol{\beta}}}  \right),
            \end{align}
        where $\hat{\LogCoe}$ is the maximum likelihood estimator of $\LogCoe$.
        \subsection{Main results}
        Our strategy to obtain the asymptotic distribution of the estimators \eqref{equ:OR} and \eqref{equ:DR} consists of two steps that allow us to apply Prohorov's Theorem \cite[see][Sect. 5, p.57]{Bill79}. 
        First, we consider arbitrary finite-dimensional distributions (i.e. based on a finite discretization on [0,1]) and deduce their joint asymptotic distributions (Lemma \ref{lemma:finite_DR} and \ref{lemma:finite_OR} in the Appendix). 
        These results are the consequence of the theory on M-estimators \cite{NEWEY19942111}. 
        We provide in these lemmas an explicit characterization of the asymptotic covariance function of multivariate OR and DR estimators, which we cannot find in the earlier literature.
        In a second step, given the finite-dimensional results, the functional asymptotic results are established by demonstrating tightness. 
        For this purpose, the re-scaled OR estimator, $\sqrt{n}\Parentheses{\hat{\mu}_{OR} - \mu_y}$, can be rewritten as:
            \begin{equation} \label{OR}
                 \sqrt{n}\Parentheses{\bar{\textbf{x}}_n - \boldsymbol{\mu}_x}^{\top}\boldsymbol{\beta}
                 + \Parentheses{\SumOverN \textbf{x}_i ^{\top}} \Parentheses{\SumOverN Z_i\textbf{x}_i\textbf{x}_i^{\top}}^{-1} \boldsymbol{\zeta} .
            \end{equation}
        where $\boldsymbol{\zeta} = \Parentheses{ n^{-1/2} \SumOverN{ Z_ix_{i,1} \epsilon_i },\dots, n^{-1/2} \SumOverN{ Z_ix_{i,p} \epsilon_i } }^{\top}$.
        Similarly, the re-scaled DR estimator, $\sqrt{n}\Parentheses{\hat{\mu}_{DR} - \mu_y}$, can be represented as
            \begin{equation} \label{DR}
                \sqrt{n}\Parentheses{\hat{\mu}_{OR} - \mu_y} 
                 - \Parentheses{\SumOverN \frac{Z_i}{\tau(\ObsCov^{\top}\hat{\LogCoe})} \textbf{x}_i ^{\top}} \Parentheses{\SumOverN Z_i\textbf{x}_i\textbf{x}_i^{\top}}^{-1} \boldsymbol{\zeta} + n^{-1/2} \SumOverN \frac{Z_i}{\tau(\ObsCov^{\top}\hat{\LogCoe})}\epsilon_i.
            \end{equation}
        Notice that, conditioning on the values of $\Parentheses{Z_i, \textbf{x}_i}_{i = 1}^n$, both (\ref{OR}) and (\ref{DR}) are the sum of independent but non-identical random elements in Hilbert space. 
        Therefore, the tightness of probability measures implied by the conditional re-scaled OR and DR estimators can be established using the following lemma.\\
        \begin{lemma}\label{lemma:LimRandomElements}
            With some probability space $\Parentheses{\Omega, \mathcal{F}, \mathcal{P}}$, let $\Curlybracket{X_n}_{n \geq 1}$ be a sequence of independent random elements in Hilbert space $\Parentheses{\mathcal{H}, \mathcal{B}_{\mathcal{H}}}$ with mean $0$ and $\mathbb{E}\Parentheses{\Norm{X_i}^2} < \infty$ for all $i=1,\dots, n$, and $\xi_n = n^{-1/2}\SumOverN X_i$. The sequence of probability measures implied by $\xi_n$, $\Curlybracket{\mathcal{P}\circ \xi_n^{-1}}_{n\geq1}$, is uniformly tight.  
        \end{lemma}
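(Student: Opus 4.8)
The plan is to reduce this infinite-dimensional tightness question to two scalar moment estimates by working in coordinates relative to a complete orthonormal system (CONS) $\Curlybracket{e_k}_{k\ge 1}$ of $\mathcal{H}$, which we may take to be separable (as is $L^2([0,1],\mathbb{R})$ in the application). Write $Q_m$ for the orthogonal projection onto $\overline{\Span}\Curlybracket{e_k : k>m}$ and set $\mu_n = \mathcal{P}\circ\xi_n^{-1}$. I would invoke the standard characterisation of tightness in a separable Hilbert space (as in Parthasarathy's monograph, or Vakhania--Tarieladze--Chobanyan): the family $\Curlybracket{\mu_n}$ is uniformly tight provided (i) $\sup_n \int \Norm{x}^2 \, d\mu_n < \infty$, and (ii) $\lim_{m\to\infty}\sup_n \int \Norm{Q_m x}^2 \, d\mu_n = 0$. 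Heuristically, (i) rules out escape to infinity through Chebyshev's inequality, while (ii) supplies the \emph{equismall tails} that, together with boundedness, characterise (pre)compact subsets of $\mathcal{H}$. It therefore suffices to verify (i) and (ii).

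Both quantities compute cleanly from independence and the zero-mean assumption, since all cross terms vanish:
\[
\int \Norm{x}^2 \, d\mu_n \;=\; \mathbb{E}\Norm{\xi_n}^2 \;=\; n^{-1}\SumOverN \mathbb{E}\Norm{X_i}^2,
\qquad
\int \Norm{Q_m x}^2 \, d\mu_n \;=\; n^{-1}\SumOverN \mathbb{E}\Norm{Q_m X_i}^2,
\]
the right-hand identity following by summing the coordinatewise variances $\mathbb{E}\,\InnerProduct{\xi_n, e_k}^2 = n^{-1}\SumOverN \mathbb{E}\,\InnerProduct{X_i, e_k}^2$ over $k>m$ and applying Parseval. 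Condition (i) is then immediate from a uniform second-moment bound on the summands.

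The main obstacle is the \emph{uniformity} of the tail in condition (ii). For each fixed $i$ we have $\mathbb{E}\Norm{Q_m X_i}^2 \downarrow 0$ as $m\to\infty$ by dominated convergence, the integrand being bounded by the integrable $\Norm{X_i}^2$ and vanishing pointwise. The difficulty is to force the average $n^{-1}\SumOverN \mathbb{E}\Norm{Q_m X_i}^2$ to be small uniformly in $n$ when the summands are not identically distributed. I would resolve this using the structure inherited from the application, where each $X_i$ equals a deterministic scalar $c_i$ (a bounded function of the conditioning variables $(Z_i,\ObsCov)$) times a common, identically distributed error element $\epsilon_i$; then $\mathbb{E}\Norm{Q_m X_i}^2 = c_i^2\, \mathbb{E}\Norm{Q_m \epsilon}^2$, so the average factorises as $\Parentheses{n^{-1}\SumOverN c_i^2}\,\mathbb{E}\Norm{Q_m\epsilon}^2$, which tends to $0$ uniformly because the prefactor is bounded and $\mathbb{E}\Norm{Q_m\epsilon}^2\to 0$. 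More abstractly, any uniform-tail (uniform-integrability) hypothesis on $\Curlybracket{X_i}$ gives the same conclusion; bare finiteness of each $\mathbb{E}\Norm{X_i}^2$ does not suffice, as the spreading sequence $X_i = R_i e_i$ (with $R_i=\pm1$ equiprobable) illustrates, since there $\int\Norm{Q_m x}^2\,d\mu_n = (n-m)/n \not\to 0$.

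Finally I would assemble an explicit compact set. Given $\epsilon>0$, use (i) and Chebyshev to pick $R$ with $\sup_n\mu_n\Parentheses{\Norm{x}>R}\le \epsilon/2$, and use (ii) and Chebyshev to pick, for each $j\ge1$, an index $m_j$ with $\sup_n\mu_n\Parentheses{\Norm{Q_{m_j}x}>1/j}\le \epsilon\, 2^{-j-1}$. The set $K=\Curlybracket{x:\Norm{x}\le R}\cap\bigcap_{j\ge1}\Curlybracket{x:\Norm{Q_{m_j}x}\le 1/j}$ is closed and bounded, and since $\Norm{Q_m x}$ is nonincreasing in $m$ it has equismall tails, hence is compact in $\mathcal{H}$; moreover $\sup_n\mu_n\Parentheses{K^c}\le\epsilon$. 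As $\epsilon$ was arbitrary, $\Curlybracket{\mathcal{P}\circ \xi_n^{-1}}$ is uniformly tight.
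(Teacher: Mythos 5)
Your proposal follows the same skeleton as the paper's own proof: both work in a CONS, split $\xi_n$ into a finite\-/dimensional projection and its orthogonal tail, control the tail via Chebyshev together with the independence/zero-mean identity $\mathbb{E}\Norm{Q_m \xi_n}^2 = n^{-1}\SumOverN \mathbb{E}\Norm{Q_m X_i}^2$, and conclude with a finite\-/dimensional-approximation compactness criterion for Hilbert spaces (the paper cites Theorem 7.7.4 of Hsing and Eubank at this point, whereas you assemble the compact set $K$ explicitly; that difference is cosmetic).

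The substantive difference is your third paragraph, and it exposes a real defect rather than a presentational one. At the corresponding step the paper's proof asserts that $\epsilon^{-2}\max_{i}\mathbb{E}\Parentheses{\Norm{X_{iJ}'}^2}$ ``can be arbitrary small if $J$ sufficiently large''; since the maximum runs over $1\leq i\leq n$ and $n$ is unbounded, this silently requires $\sup_{i\geq 1}\mathbb{E}\Norm{Q_J X_i}^2 \to 0$ as $J\to\infty$, a uniform-tail condition that does not follow from independence, zero mean, and $\mathbb{E}\Norm{X_i}^2<\infty$ for each $i$. Your counterexample $X_i=R_ie_i$ is valid: there $\Norm{Q_m\xi_n}^2=(n-m)/n$ almost surely for $n>m$, while every compact $K$ has equismall tails ($\sup_{x\in K}\Norm{Q_m x}\to 0$ by Dini's theorem), so $\Pr\Parentheses{\xi_n\in K}=0$ for all large $n$ and the family is not tight --- i.e., the lemma is false as stated for independent, non-identically distributed summands. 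So the gap you name sits in the paper's lemma and proof, not in your argument; the same caveat applies to your condition (i) and to the paper's constant $c=\sum_{j}\max_i\mathbb{E}\InnerProduct{X_i,e_j}^2$, both of which need bounds uniform in $i$. Your repair is also the right one for how the lemma is actually used: conditionally on $\Parentheses{Z_i,\ObsCov}_{i=1}^n$, the stochastic summands in \eqref{OR} and \eqref{DR} have the product form $c_i\epsilon_i$ with i.i.d.\ $\epsilon_i$ and $n^{-1}\SumOverN c_i^2$ bounded (almost surely, by the law of large numbers), so the averaged tail factorises and vanishes uniformly in $n$. Stated abstractly, the lemma should carry an added uniform-tail (or uniform-integrability) hypothesis on $\Curlybracket{\Norm{X_i}^2}_{i\geq1}$, exactly as you conclude.
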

        To verify tightness without conditioning, we place the re-scaled estimator and $\Parentheses{Z_i, \textbf{x}_i}_{i=1}^n$ in a Cartesian product space. Since we know that the two re-scaled OR and DR estimators are uniformly tight conditioning on the values of $\Parentheses{Z_i, \textbf{x}_i}_{i=1}^n$, then their unconditional tightness is a consequence of the following lemma.
    \begin{lemma}\label{lemma:joint}     
        With some probability space $\Parentheses{\Omega, \mathcal{F}, \mathcal{P}}$, let $\Parentheses{X_n, Y_n}_{n\geq1}$ be a sequence of paired random elements taking values from $\Parentheses{\mathcal{X}, \mathcal{B}_{\mathcal{X}}}$ and $\Parentheses{\mathcal{Y}, \mathcal{B}_{\mathcal{Y}}}$ respectively. 
        Let $\mathbb{P}_n$ be the joint probability measures over $\sigma\Parentheses{\mathcal{B}_{\mathcal{X}} \times \mathcal{B}_{\mathcal{Y}}}$ which is the smallest $\sigma$-algebra making projections to $\mathcal{X}$ and $\mathcal{Y}$ all measurable, i.e. $\mathcal{P}\circ X_n^{-1} \Parentheses{E} = \mathbb{P}_n(E \times \mathcal{Y})$ $\forall E \in \mathcal{B}_{\mathcal{X}}$.
        Assume $\forall y \in \mathcal{Y}$, $\exists$ $\sigma$-algebra $\mathcal{B}_{\mathcal{X}|y}$ and $\mathcal{P} \circ \Parentheses{X_n | y}^{-1}$ such that the joint probability measure can be represented by disintegration, i.e.
        $\mathbb{P}_n \Parentheses{E \times F} = \int_{F} \mathcal{P}\circ\Parentheses{X_n|y}^{-1}(E) d \mathcal{P}\circ Y_n^{-1}$. 
        If $\mathcal{P}\circ \Parentheses{X_n | y}^{-1}$ is uniformly tight, then the probability measure implied by $X_n$ is uniformly tight. Further, if $\mathcal{P}\circ Y_n^{-1}$ is also uniformly tight, then the joint probability measure is uniformly tight. 
    \end{lemma}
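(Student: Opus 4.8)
The plan is to argue directly from the disintegration identity, bounding the complements of suitably chosen compact rectangles. I read the hypothesis that $\mathcal{P}\circ\Parentheses{X_n|y}^{-1}$ is uniformly tight as uniformity in \emph{both} the index $n$ and the conditioning value $y$: for every $\varepsilon>0$ there is a compact $K\subseteq\mathcal{X}$ with $\mathcal{P}\circ\Parentheses{X_n|y}^{-1}\Parentheses{K^c}<\varepsilon$ for all $n$ and all $y\in\mathcal{Y}$. This is the form in which the conditional tightness is delivered by Lemma~\ref{lemma:LimRandomElements} in the conditioning construction that precedes the statement.

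First I would establish the marginal tightness of $X_n$. Fixing $\varepsilon>0$ and taking the compact $K_\varepsilon\subseteq\mathcal{X}$ furnished by the uniform conditional tightness, the disintegration identity with $E=K_\varepsilon^c$ and $F=\mathcal{Y}$ gives
\begin{equation*}
\mathcal{P}\circ X_n^{-1}\Parentheses{K_\varepsilon^c}
= \mathbb{P}_n\Parentheses{K_\varepsilon^c\times\mathcal{Y}}
= \int_{\mathcal{Y}} \mathcal{P}\circ\Parentheses{X_n|y}^{-1}\Parentheses{K_\varepsilon^c}\,d\mathcal{P}\circ Y_n^{-1}.
\end{equation*}
Since the integrand is bounded by $\varepsilon$ and $\mathcal{P}\circ Y_n^{-1}$ is a probability measure, the right-hand side is at most $\varepsilon$ for every $n$, with a single compact $K_\varepsilon$ independent of $n$. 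This is precisely uniform tightness of $\Curlybracket{\mathcal{P}\circ X_n^{-1}}$.

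For the second claim I would combine compacta from the two factors. Given $\varepsilon>0$, pick a compact $K_{\mathcal{X}}\subseteq\mathcal{X}$ with $\mathcal{P}\circ\Parentheses{X_n|y}^{-1}\Parentheses{K_{\mathcal{X}}^c}<\varepsilon/2$ for all $n,y$, and, using the additional hypothesis, a compact $K_{\mathcal{Y}}\subseteq\mathcal{Y}$ with $\mathcal{P}\circ Y_n^{-1}\Parentheses{K_{\mathcal{Y}}^c}<\varepsilon/2$ for all $n$. The product $K_{\mathcal{X}}\times K_{\mathcal{Y}}$ is compact in $\mathcal{X}\times\mathcal{Y}$ and its complement decomposes into two measurable rectangles, $\Parentheses{K_{\mathcal{X}}\times K_{\mathcal{Y}}}^c=\Parentheses{K_{\mathcal{X}}^c\times\mathcal{Y}}\cup\Parentheses{\mathcal{X}\times K_{\mathcal{Y}}^c}$. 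Subadditivity together with the disintegration identity (taking $E=\mathcal{X}$ in the second rectangle, so that $\mathbb{P}_n\Parentheses{\mathcal{X}\times K_{\mathcal{Y}}^c}=\mathcal{P}\circ Y_n^{-1}\Parentheses{K_{\mathcal{Y}}^c}$, and reusing the marginal bound of the previous paragraph in the first) then yields $\mathbb{P}_n\Parentheses{\Parentheses{K_{\mathcal{X}}\times K_{\mathcal{Y}}}^c}<\varepsilon/2+\varepsilon/2=\varepsilon$ for every $n$, which is uniform tightness of $\Curlybracket{\mathbb{P}_n}$.

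The arithmetic above is routine bookkeeping; the two substantive points I would be careful about are the following. The essential one is the uniformity of the conditional tightness over $y$: without it a $y$-dependent compact cannot be pulled out of the integral and the marginal bound collapses, so I would explicitly state this uniformity and check that the conditioning construction preceding the lemma genuinely produces a single compact valid for all conditioning values. The secondary, more technical, point is the measurability of $y\mapsto\mathcal{P}\circ\Parentheses{X_n|y}^{-1}\Parentheses{E}$, which the disintegration integrals require; this is part of the standing disintegration hypothesis, and together with the elementary fact that a finite product of compacta is compact it guarantees that $K_{\mathcal{X}}\times K_{\mathcal{Y}}$ is a legitimate witnessing compact in the product space.
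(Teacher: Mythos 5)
Your proof is correct, and it rests on the same two pillars as the paper's own argument: the disintegration identity and the product compact $K_{\mathcal{X}}\times K_{\mathcal{Y}}$. The routes diverge in three ways worth recording. First, for the joint tightness you bound the complement, writing $\left(K_{\mathcal{X}}\times K_{\mathcal{Y}}\right)^{c}=\left(K_{\mathcal{X}}^{c}\times\mathcal{Y}\right)\cup\left(\mathcal{X}\times K_{\mathcal{Y}}^{c}\right)$ and applying subadditivity, so each rectangle has one factor equal to the whole space; the paper instead integrates the conditional law over $F$ and lower-bounds $\mathbb{P}_n\left(E\times F\right)\geq\left(1-\epsilon/2\right)^{2}>1-\epsilon$. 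The two computations are equivalent, though yours avoids the squaring and only invokes the disintegration on ``marginal'' rectangles. Second, you quote the elementary fact that a finite product of compact sets is compact, whereas the paper devotes the entire second half of its proof to establishing this by an explicit open-cover (tube-lemma) argument; nothing is lost by citing it. Third --- and this is the most valuable part of your write-up --- you state explicitly that the hypothesis must be read as uniform tightness in \emph{both} $n$ and the conditioning value $y$, i.e.\ a single compact $K$ with $\mathcal{P}\circ\left(X_n|y\right)^{-1}\left(K^{c}\right)<\varepsilon$ for all $n$ and all $y$. The paper's proof uses exactly this reading (it extracts one compact $E$ and integrates the bound $\mathcal{P}\circ\left(X_n|y\right)^{-1}\left(E\right)\geq 1-\epsilon$ over all $y$) but never says so; without uniformity in $y$ the compact cannot be pulled out of the integral and the first claim would fail. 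Your flagging of the measurability of $y\mapsto\mathcal{P}\circ\left(X_n|y\right)^{-1}\left(E\right)$, needed for the integrals to make sense, is likewise implicit in the paper's standing disintegration assumption but deserves the explicit mention you give it.
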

        
        It is worth noting that the functional limiting distribution of the OR estimator \eqref{equ:OR} could be obtained by applying functional asymptotic theorems developed in recent years, e.g. \cite{telschow2022simultaneous}. However, the functional DR estimator \eqref{equ:DR} presents more of a challenge, therefore the strategy outlined above is required.  
        To maintain consistency throughout the paper, this strategy is applied to address both estimators \eqref{equ:OR} and \eqref{equ:DR}. This allows us to obtain the following results on the asymptotic distributions for these functional estimators.
            \begin{theorem}\label{thm:tight:OR}
                Assume that the working model \eqref{equ:regmod1} is correctly specified, i.e., $E(\mathcal{Y}_i \mid \mathbf{x}_i) = \mathbf{x}_i^{\top}\boldsymbol{\beta}$. Then the functional OR estimator \eqref{equ:OR} is asymptotically distributed as a Gaussian process:
                \begin{align*}
                    \sqrt{n}(\hat{\mu}_{OR} - \mu_y) \overset{d}{\rightarrow} \mathcal{GP}(0,\boldsymbol{\beta}(s)^{\top} \boldsymbol{\Sigma}_{\mathbf{x}} \boldsymbol{\beta}(t) + \boldsymbol{\mu}_{\mathbf{x}}^{\top} \boldsymbol{\Pi}^{-1} \boldsymbol{\mu}_{\mathbf{x}} \cdot \sigma_\epsilon(s,t)).
                \end{align*}
            \end{theorem}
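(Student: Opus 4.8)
The plan is to follow the paper's two-step recipe: establish convergence of the finite-dimensional distributions, then tightness, so that Prohorov's theorem delivers weak convergence to a process whose law is pinned down by its (Gaussian) finite-dimensional marginals. The starting point is the algebraic identity \eqref{OR}. Under correct specification $\mu_y(s) = \boldsymbol{\mu}_{\textbf{x}}^{\top}\boldsymbol{\beta}(s)$, and writing $\hat{\boldsymbol{\beta}} - \boldsymbol{\beta} = \Parentheses{\SumOverN Z_i\ObsCov\ObsCov^{\top}}^{-1}\SumOverN Z_i\ObsCov\epsilon_i$, I would verify \eqref{OR} directly by adding and subtracting $\bar{\textbf{x}}_n^{\top}\boldsymbol{\beta}$, so that $\sqrt{n}(\hat{\mu}_{OR}-\mu_y)$ splits into a covariate term $\sqrt{n}\Parentheses{\bar{\textbf{x}}_n - \boldsymbol{\mu}_{\textbf{x}}}^{\top}\boldsymbol{\beta}$ and a residual term built from $\boldsymbol{\zeta}$.

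For the finite-dimensional distributions, I would fix a grid $s_1,\dots,s_k \in [0,1]$ and apply Lemma \ref{lemma:finite_OR}, whose M-estimation argument yields joint asymptotic normality of $\Parentheses{\sqrt{n}(\hat{\mu}_{OR}(s_1)-\mu_y(s_1)),\dots,\sqrt{n}(\hat{\mu}_{OR}(s_k)-\mu_y(s_k))}$. To read off the covariance I would use the two limits $\bar{\textbf{x}}_n \to \boldsymbol{\mu}_{\textbf{x}}$ and $n^{-1}\SumOverN Z_i\ObsCov\ObsCov^{\top} \to \boldsymbol{\Pi} := \mathbb{E}(Z\textbf{x}\textbf{x}^{\top})$ in probability, together with Slutsky, so that the residual term is asymptotically $\boldsymbol{\mu}_{\textbf{x}}^{\top}\boldsymbol{\Pi}^{-1}\boldsymbol{\zeta}$. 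A conditional-variance computation gives $\Cov(\boldsymbol{\zeta}(s),\boldsymbol{\zeta}(t)) = \boldsymbol{\Pi}\,\sigma_\epsilon(s,t)$, so the residual term contributes $\boldsymbol{\mu}_{\textbf{x}}^{\top}\boldsymbol{\Pi}^{-1}\boldsymbol{\mu}_{\textbf{x}}\,\sigma_\epsilon(s,t)$, while the covariate term contributes $\boldsymbol{\beta}(s)^{\top}\boldsymbol{\Sigma}_{\textbf{x}}\boldsymbol{\beta}(t)$ through the ordinary CLT for $\sqrt{n}\Parentheses{\bar{\textbf{x}}_n - \boldsymbol{\mu}_{\textbf{x}}}$. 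The cross term vanishes: only the index $i=j$ survives independence, and there $\mathbb{E}(\epsilon_i(t)\mid \ObsCov, Z_i)=0$ under correct specification and ignorability kills it. This reproduces exactly the stated kernel.

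Tightness is where the two Hilbert-space lemmas enter. Conditioning on $\Parentheses{Z_i,\ObsCov}_{i=1}^n$, the covariate term is deterministic while the residual term becomes $\mathbf{a}_n^{\top}\boldsymbol{\zeta} = n^{-1/2}\SumOverN \mathbf{a}_n^{\top}Z_i\ObsCov\,\epsilon_i$ with the fixed row vector $\mathbf{a}_n^{\top} = \Parentheses{\SumOverN \ObsCov^{\top}}\Parentheses{\SumOverN Z_i\ObsCov\ObsCov^{\top}}^{-1}$. Each summand $\mathbf{a}_n^{\top}Z_i\ObsCov\,\epsilon_i$ is a scalar multiple of the random function $\epsilon_i$, hence an independent, mean-zero, square-integrable random element of $L^2([0,1],\mathbb{R})$, so Lemma \ref{lemma:LimRandomElements} yields uniform tightness of the conditional laws. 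I would then invoke Lemma \ref{lemma:joint}, placing the rescaled estimator and $\Parentheses{Z_i,\ObsCov}_{i=1}^n$ in a product space and using the disintegration there to upgrade conditional tightness to unconditional tightness.

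The hard part is not the bookkeeping but making the conditional tightness argument uniform in $n$: the weights $\mathbf{a}_n$ are themselves random and $n$-dependent, so I must show that $\mathbf{a}_n$ stays bounded in probability via its limit $\boldsymbol{\mu}_{\textbf{x}}^{\top}\boldsymbol{\Pi}^{-1}$ and that the conditional second moments $\SumOverN \mathbb{E}\Parentheses{\Norm{\mathbf{a}_n^{\top}Z_i\ObsCov\epsilon_i}^2 \mid Z_i,\ObsCov}$ are controlled uniformly, so that Lemma \ref{lemma:LimRandomElements} applies on a set of covariate configurations of probability tending to one. Once tightness holds, Prohorov's theorem gives relative compactness; since every weak limit point has the Gaussian finite-dimensional distributions identified above, the limit is unique and equals the Gaussian process with covariance $\boldsymbol{\beta}(s)^{\top}\boldsymbol{\Sigma}_{\textbf{x}}\boldsymbol{\beta}(t) + \boldsymbol{\mu}_{\textbf{x}}^{\top}\boldsymbol{\Pi}^{-1}\boldsymbol{\mu}_{\textbf{x}}\,\sigma_\epsilon(s,t)$, as claimed.
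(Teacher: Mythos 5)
Your proposal is correct and follows essentially the same route as the paper: the decomposition \eqref{OR}, Lemma \ref{lemma:finite_OR} for the finite-dimensional distributions, Lemma \ref{lemma:LimRandomElements} for tightness conditional on $(Z_i,\textbf{x}_i)_{i=1}^n$, Lemma \ref{lemma:joint} to remove the conditioning, and Prohorov's theorem to conclude. The only difference is that you make explicit the uniformity issue caused by the random, $n$-dependent weights $\mathbf{a}_n$ in the conditional tightness step, a point the paper's own proof leaves implicit.
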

            \begin{proof}
                By Prohorov's Theorem, this theorem is a consequence of the results in Lemma \ref{lemma:finite_OR}, \ref{lemma:LimRandomElements}, and \ref{lemma:joint}; see Appendix \ref{finite.app} and \ref{tight.app} for details.
            \end{proof}
            \begin{theorem}\label{thm:tight:DR}
                Assume that at least one of the working models \eqref{equ:regmod1} and \eqref{equ:ps} is correctly specified, i.e., $E(\mathcal{Y}_i \mid \mathbf{x}_i) = \mathbf{x}_i^{\top}\boldsymbol{\beta}$ or $\Pr[ Z_i=1 | \mathbf{x}_i ]= \tau(\mathbf{x}_i^{\top} \LogCoe)$.
                Then, the functional DR estimator $\sqrt{n}(\hat{\mu}_{DR} - \mu_y)$ is asymptotically distributed as a Gaussian process with zero mean function. Further, if both working models are correctly specified, the covariance function simplifies and we have:
                \begin{align*}
                    \sqrt{n}(\hat{\mu}_{DR} - \mu_y) \overset{d}{\rightarrow} \mathcal{GP}(0,
                \boldsymbol{\beta}(s)^{\top} \boldsymbol{\Sigma}_{\mathbf{x}} \boldsymbol{\beta}(t) + \mathbb{E} [\tau^{-1}(\mathbf{x}_i^{\top}\LogCoe)]\sigma_\epsilon(s,t)).
                \end{align*}
            \end{theorem}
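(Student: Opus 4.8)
The plan is to obtain the functional weak convergence from Prohorov's Theorem, which requires two ingredients: convergence of all finite-dimensional distributions to a Gaussian limit, and tightness of the sequence of probability measures induced by $\sqrt{n}(\hat{\mu}_{DR} - \mu_y)$ in $L^2([0,1],\mathbb{R})$. Since the finite-dimensional convergence is relegated to Lemma \ref{lemma:finite_DR} and the tightness tools to Lemmas \ref{lemma:LimRandomElements} and \ref{lemma:joint}, the argument here is essentially an assembly, but the double-robustness content sits in how these pieces are combined.

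First I would invoke Lemma \ref{lemma:finite_DR}: for any finite grid $0 \le t_1 < \cdots < t_m \le 1$, the vector with entries $\sqrt{n}(\hat{\mu}_{DR}(t_k) - \mu_y(t_k))$ is asymptotically multivariate normal with zero mean. This is where double robustness enters. Viewing $(\hat{\boldsymbol{\beta}}, \hat{\LogCoe})$ as jointly solving an M-estimation problem, the estimating function attached to $\hat{\mu}_{DR}$ has expectation zero whenever at least one working model is correct: the augmentation term $\frac{Z_i}{\tau(\ObsCov^{\top}\hat{\LogCoe})}\Parentheses{\mathcal{Y}_i - \ObsCov^{\top}\hat{\boldsymbol{\beta}}}$ removes the bias of the OR part when the outcome model is misspecified, and has conditional mean zero when only the propensity model is misspecified. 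Linearizing around the probability limits of $\hat{\boldsymbol{\beta}}$ and $\hat{\LogCoe}$ and applying the usual sandwich expansion then gives zero-mean asymptotic normality, with the (general) covariance read off from the M-estimation. Specializing to the case where both models hold — so that $\epsilon_i$ has conditional mean zero and $\mathbb{E}[Z_i \mid \ObsCov] = \tau(\ObsCov^{\top}\LogCoe)$ — collapses the cross terms and yields the stated covariance $\boldsymbol{\beta}(s)^{\top}\boldsymbol{\Sigma}_{\textbf{x}}\boldsymbol{\beta}(t) + \mathbb{E}[\tau^{-1}(\ObsCov^{\top}\LogCoe)]\sigma_\epsilon(s,t)$.

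Second, I would establish tightness from the representation \eqref{DR}. The term $\sqrt{n}\Parentheses{\bar{\textbf{x}}_n - \boldsymbol{\mu}_x}^{\top}\boldsymbol{\beta}$ inherited from \eqref{OR} lies in the finite-dimensional subspace $\Span\{\beta_1,\ldots,\beta_p\}$, so its tightness is immediate from the multivariate central limit theorem for $\bar{\textbf{x}}_n$ applied coordinatewise. The genuinely infinite-dimensional randomness enters only through $\boldsymbol{\zeta}$ and through $n^{-1/2}\SumOverN \frac{Z_i}{\tau(\ObsCov^{\top}\hat{\LogCoe})}\epsilon_i$. The key observation is that $\hat{\LogCoe}$ is the logistic MLE based on $(Z_i, \ObsCov)_{i=1}^n$ alone, so conditioning on $(Z_i, \ObsCov)_{i=1}^n$ freezes both $\hat{\LogCoe}$ and all matrix coefficients in \eqref{DR}; conditionally, these two terms become sums of independent, non-identically distributed, mean-zero Hilbert-space elements (the $\epsilon_i$ being independent). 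Under an overlap condition ensuring the weights $Z_i/\tau(\ObsCov^{\top}\hat{\LogCoe})$ are bounded and $\mathbb{E}\Norm{\epsilon_i}^2 < \infty$, the second-moment hypothesis of Lemma \ref{lemma:LimRandomElements} holds, giving uniform tightness of the conditional laws, and Lemma \ref{lemma:joint}, via disintegration over $(Z_i, \ObsCov)_{i=1}^n$, lifts this to unconditional tightness.

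Combining Gaussian finite-dimensional convergence with tightness, Prohorov's Theorem yields weak convergence of $\sqrt{n}(\hat{\mu}_{DR} - \mu_y)$ to a process whose finite-dimensional distributions are those of Lemma \ref{lemma:finite_DR}, i.e. a mean-zero Gaussian process, with the asserted covariance when both models are correct. I expect the main obstacle to be the double-robustness bookkeeping in the finite-dimensional step: one must verify that the first-order contribution of $\hat{\LogCoe} - \LogCoe^{*}$ to $\sqrt{n}(\hat{\mu}_{DR} - \mu_y)$ vanishes whenever the outcome model is correct, and symmetrically that the contribution of $\hat{\boldsymbol{\beta}} - \boldsymbol{\beta}^{*}$ vanishes whenever the propensity model is correct, so that the limit is centered regardless of which single model fails. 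Controlling the nonlinear weights $Z_i/\tau(\ObsCov^{\top}\hat{\LogCoe})$ through a Taylor expansion around $\LogCoe^{*}$ is the technical heart of this verification.
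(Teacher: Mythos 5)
Your proposal is correct and follows essentially the same route as the paper: finite-dimensional zero-mean Gaussian limits from Lemma \ref{lemma:finite_DR} (obtained via the joint M-estimation/sandwich expansion, where the double-robustness bookkeeping you highlight indeed resides), then tightness of the representation \eqref{DR} by conditioning on $(Z_i,\textbf{x}_i)_{i=1}^n$ so that Lemma \ref{lemma:LimRandomElements} applies to the resulting sums of independent mean-zero Hilbert-space elements, lifted to unconditional tightness by the disintegration Lemma \ref{lemma:joint}, and finally Prohorov's Theorem. Your explicit overlap/bounded-weights condition guaranteeing the second-moment hypothesis is a detail the paper leaves implicit among its ``usual regularity conditions'' for M-estimators.
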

            \begin{proof}
                By Prohorov's Theorem, this theorem is a consequence of the results in Lemma \ref{lemma:finite_OR}, \ref{lemma:LimRandomElements}, and \ref{lemma:joint}; see Appendix \ref{finite.app} and \ref{tight.app} for details.
            \end{proof}
    
        Note that Theorem \ref{thm:tight:DR} yields a doubly robustness property for the functional estimator, whereby it is asymptotically distributed as a Gaussian process with zero mean function (concentrates at root-$n$ rate) when only one of the nuisance models is correctly specified. 
        Given that we have functional estimators with Gaussian processes as their limiting distribution, we can obtain simultaneous confidence bands as described below.
        \subsection{Feasible estimation and simultaneous confidence bands}\label{sec:feasible}
        The functional estimators \eqref{equ:OR} and \eqref{equ:DR} are not directly feasible since the functional outcomes are infinite dimensional objects. 
        However, considering functional estimators and their derived asymptotic distribution allows us to perform simultaneous inference on feasible counterparts. 
        An example of feasible estimators that we use here are the finite-dimensional estimators based on a discretization of the domain in \eqref{equ:OR2} and \eqref{equ:DR2} in Appendix \ref{finite.app}. 

        Simultaneous confidence bands for the functional parameter of interest, $\mu_y$, can be constructed by applying the results obtained by \cite{Liebl}. 
        This work introduces "fast and fair" simultaneous confidence bands based on the construction of an adaptive, non-constant critical value function, which ensures that false positive rates are balanced across different partitions of the domain. 
        The method enables both global and local interpretations of the bands and is less computationally expensive compared to many simulation-based and resampling-based approaches \cite[e.g.,][]{Pini17, Abram}. 

        To apply the results in \cite{Liebl}, we need to further restrict $\mathcal{Y}_i$ to be once continuously differentiable almost surely in the sequel. 
        Under this assumption and the conditions of Theorem \ref{thm:tight:OR}, a valid 95\% simultaneous confidence band (SCB) for $\mu_{y}$ based on the OR estimator is given by:
            \begin{equation}\label{equ:ffscb}
                \mathrm{SCB_{OR}}(t) := \hat{\mu}_{OR}(t) \pm \mathit{u}^\star_{2.5}(t) \sqrt{C_{OR}(t,t) / n}, 
            \end{equation}
        where $C_{OR}$ is the covariance function of the OR estimator given in Theorem \ref{thm:tight:OR}, and $\mathit{u}^\star_{2.5}(t)$ is a critical value function chosen such that 
            \begin{equation}\label{equ:coverage}
                \Pr\left( \mu_y(t) \in \mathrm{SCB_{OR}}(t) \ \forall \ t \in [0,1]  \right) \geq 0.95.
            \end{equation}
        Simultaneous confidence bands for $\mu_{y}$ based on the DR estimator can be constructed analogously by using $\hat{\mu}_{DR}$ and $C_{DR}$ given in Theorem \ref{thm:tight:DR}. 
        See \cite{Liebl2, Liebl} for further details on the construction and properties of these simultaneous confidence bands.
    \section{Monte Carlo study}
        We here present numerical experiments to study finite sample properties of the estimators in relation to the asymptotic results given above. 
        In particular, we study empirical coverages of the simultaneous confidence bands using the asymptotic covariance functions deduced in Theorems \ref{thm:tight:OR} and \ref{thm:tight:DR}.
        \subsection{Design}
        We generate outcomes $\textbf{y}_i$ following (\ref{equ:regmod2}) (in Appendix \ref{finite.app}) at $T = 50$ equidistant points on $[0,1],$ using the following data generating process: $x_{1i} = 1$; $(x_{2i}, x_{3i}, x_{4i}) \sim \mathcal{MVN} (\mu, \Sigma)$ with $\mu = (-2, 4, 0)$, 
            $$
                \Sigma = 
                \begin{bmatrix}
                     1 & 0.2 & 0.3\\
                    0.2 & 2 & 0.6 \\
                    0.3 & 0.6 & 0.4
                \end{bmatrix};
            $$ 
        $x_{5i} \sim Ber(0.2)$; $x_{6i} \sim Bin(3, 0.6)$.
        The functional coefficients in $\boldsymbol{\beta}$ are set to $40-t, 2\sin(4t), 3 - \cos(5t), 1.5\log(5t + 0.1), 0.5\sin(2t), 2 - 1.5t + 1.3t^2$, respectively, and evaluated at time $t_j, j = 1, \dots, T.$
        The random error term is generated in the first situation as $\boldsymbol{\epsilon}_i \sim \mathcal{MVN} (\mathbf{0}, \Sigma_\epsilon)$. Here, $\Sigma_\epsilon$ is the discretisation of a (stationary) Matérn covariance function so that 
            $$
            \Sigma_{\epsilon (j, k)} = \frac{1}{2^{\kappa - 1} \Gamma(\kappa)} \left( \frac{\mid t_j - t_k \mid}{\phi} \right)^\kappa K_\kappa \left( \frac{\mid t_j - t_k \mid}{\phi} \right),
            $$
        with $\kappa = 1.5$, $\phi = 0.1$, and where $K_\kappa$ is the modified Bessel function of the third kind of order $\kappa$. 
        In a second situation, we generate the error term as $\boldsymbol{\epsilon}_i \sim \mathcal{MVT} (\mathbf{0}, \Delta, \upsilon)$, so that the covariance matrix $\Sigma_\epsilon = \Delta \upsilon/(\upsilon - 2)$, with $\upsilon = 4$.
        The dispersion matrix $\Delta$ is generated as $Q \Lambda Q^{\top}$, where $Q$ is a $T \times T$ arbitrary orthonormal matrix and $\Lambda$ is a $T \times T$ diagonal matrix whose diagonal elements are a sequence of length $T$ ranging from  1 to 3.
        Finally, the missing indicator variable $Z_i$ is generated using (\ref{equ:ps}), with $\LogCoe = (0.3, -0.3, -0.3, -0.3, -0.3, -0.3)^{\top}$, so that $\Pr[ Z_i=1] = 0.71$. 
        This means that on average, around 30\% of the simulated functional outcomes are missing. 
        \begin{figure}[t!]
            \begin{center}
                \includegraphics[width=0.45\linewidth]{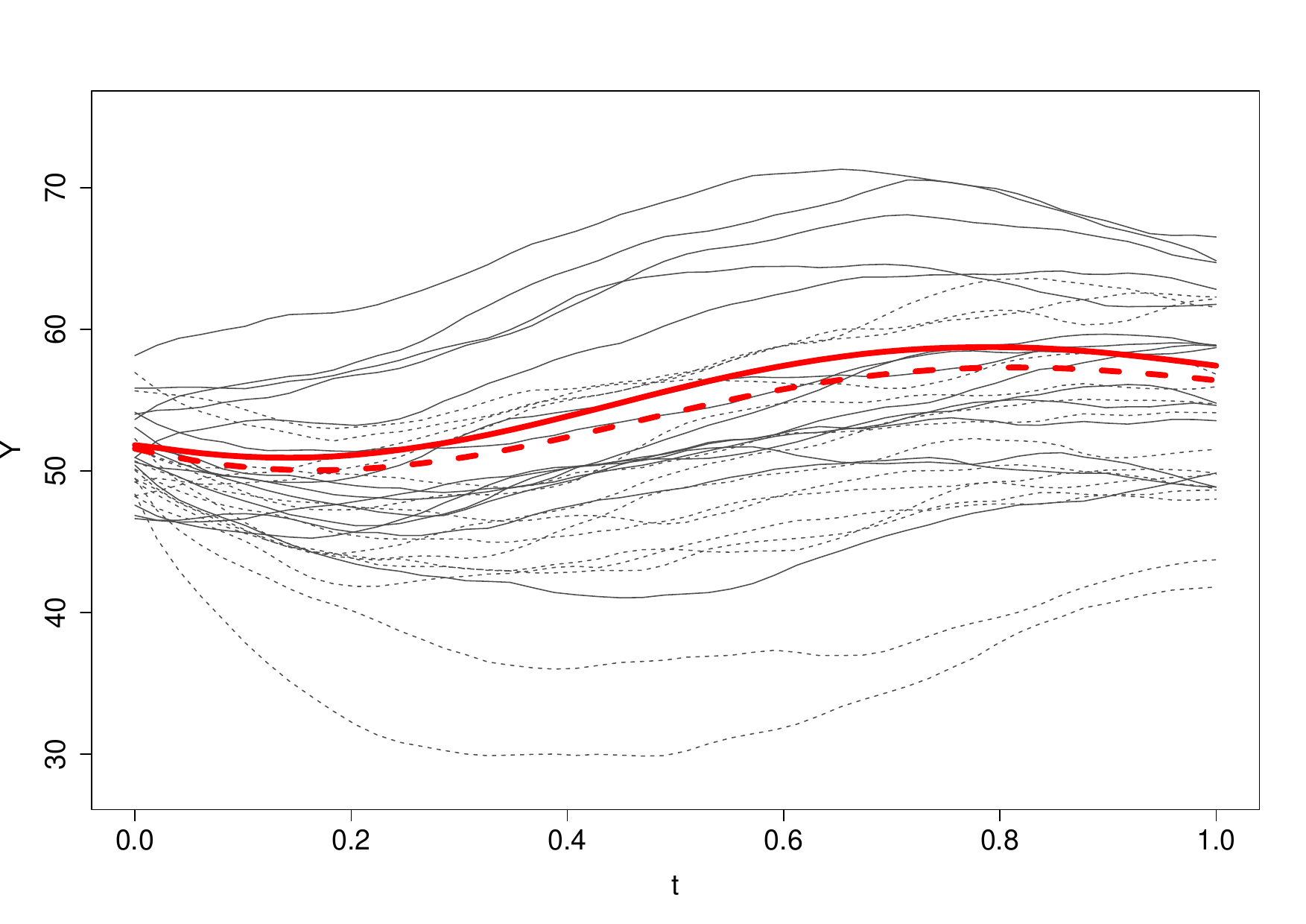} \includegraphics[width=0.45\linewidth]{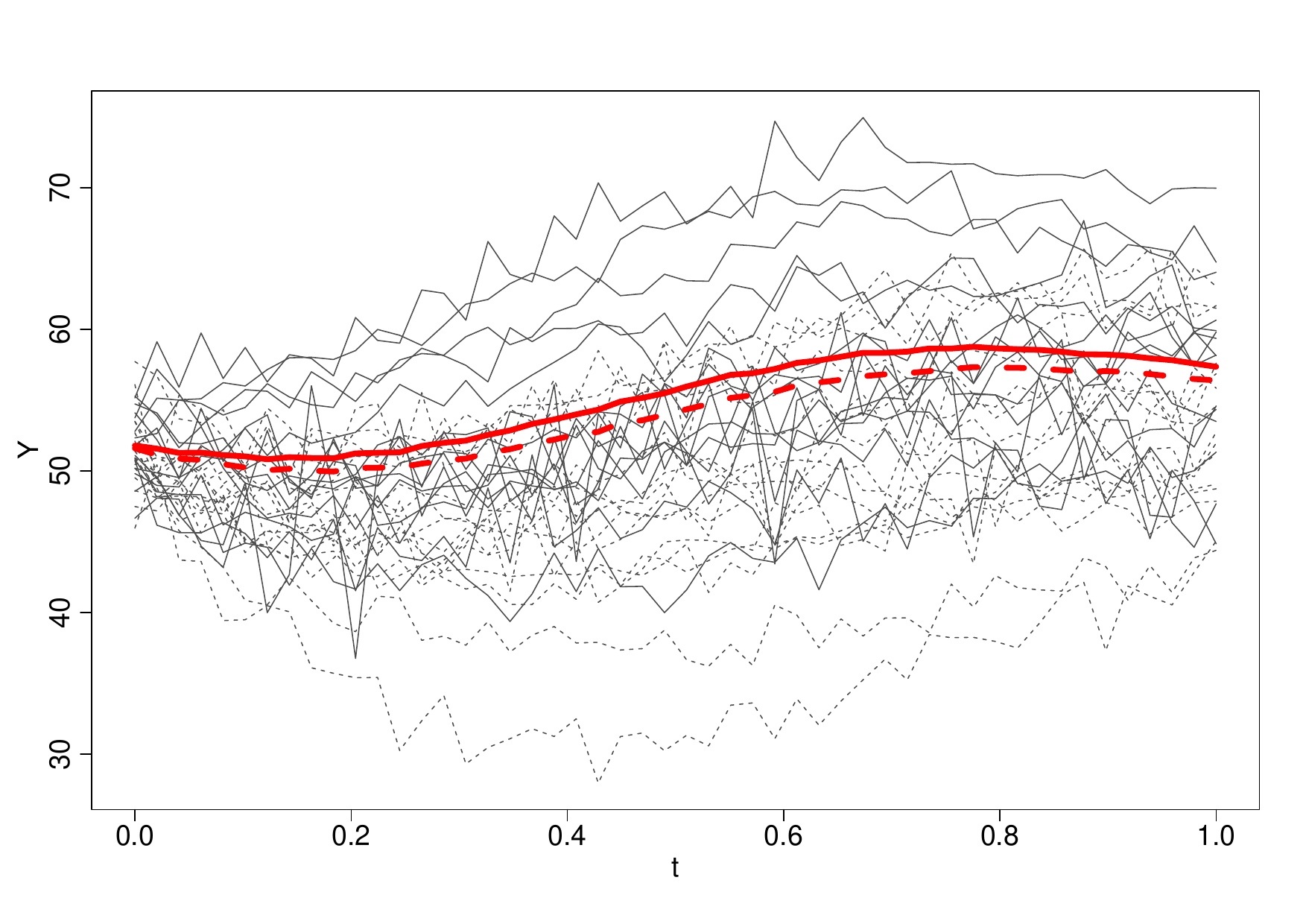} 
            \end{center}
        \caption{Observed (solid grey curves) and "assumed missing" (dotted grey curves) outcomes for 30 generated individuals. Mean function of both observed and missing outcomes (dashed red line) and the naive mean estimate based on observed data (solid red line). Left: Data generated using Gaussian process errors; Right: Data generated using multivariate t-distributed errors.}
        \label{fig:curves_gp}
        \end{figure}
        Figure \ref{fig:curves_gp} shows the outcome functions for 30 generated individuals, using the Gaussian process (left panel) and multivariate t-distribution (right panel) to generate the random errors. 
        The two panels also display the empirical mean function of the outcome using both the observed and the "assumed missing" outcomes (dashed red) and the mean function obtained using only the observed outcomes (solid red). 
        The difference between these curves yields the bias of the naive mean estimator based only on the observed functions. 
        We note the fact that the t-distributed errors yields much less smooth functions, i.e. mimicking a situation where the underlying continuous outcome function is not once continuously differentiable almost surely as assumed in the theory. 

        We use 1000 replicates with sample sizes $n = 250, 500, 1000$ and $3000$. 
        For each replicate, we estimate $\mu_y$ with the outcome regression and doubly robust estimators.
        In order to confirm the doubly robustness property of the latter estimator, we consider four different scenarios. 
        Firstly, when both working models, (\ref{equ:regmod2}) and (\ref{equ:ps}),  are correctly specified; secondly, when only the outcome regression model, (\ref{equ:regmod2}), is misspecified. 
        Thirdly, when only the model for the missingness mechanism, (\ref{equ:ps}), is misspecified, and lastly when both working models are misspecified. 
        In each case, the misspecification entails that the model(s) omit(s) two of the covariates, $x_3$ and $x_5$. 
        The performance of the estimators and their inference is evaluated with respect to bias, variance, mean squared error (MSE), and the empirical coverage of 95\% confidence bands, both pointwise (PCB) and simultaneous bands (SCB). 
        The SCB is constructed using sample variants of the covariance functions given in Theorem \ref{thm:tight:OR} and \ref{thm:tight:DR}. 
        This is done using the \texttt{R}-package \texttt{ffscb} \cite{ffscb}, implementing the computation of the functional critical value $\mathit{u}^\star_{.025}(t)$. 
        The PCB instead uses a constant critical value $t_{.025}$, the 97.5th percentile from the t-distribution with $n-1$ degrees of freedom. 
        In addition, we compare the performance of the estimators to the naive estimate based on complete case analysis, i.e. calculating the sample mean and its covariance based only the observed functional outcomes. 

        \begin{table}[t!]\label{table:simGP}
            \caption{Empirical coverages of the simultaneous confidence bands (SCB) and pointwise confidence bands (PCB) for the OR- and DR-estimator in the presence of different types of model misspecification compared to a complete case analysis. Based on 1000 simulation replicates with Gaussian error terms. The nominal coverage level is 95\%.}
            \begin{center}
                \resizebox{0.95\textwidth}{!}{
                \begin{tabular}{l l c c  c c c c c c}
                      \hline
                      &  & \multicolumn{2}{l}{\textbf{OR-estimator}} & &\multicolumn{4}{l}{\textbf{DR-estimator}} & \textbf{Complete Cases}\\
                      \multicolumn{2}{c}{\textbf{Misspecification:}} & None & Outcome && None & Outcome & Missingn. & Both & - \\
                      \hline
                     \multirow{2}{4em}{n = 250} & SCB & 98.1 & 92.1   && 98.1 & 98.5 & 98.2 & 91.2 & 12.3\\
                     & PCB & 83.2 & 69.2  && 82.9 & 86.0 & 82.6 & 70.0 & 3.8\\
                      \hline 
                      \multirow{2}{4em}{n = 500} & SCB & 97.3 & 86.7  && 97.2 & 98.0 & 97.2 & 86.2 & 0.8\\
                     & PCB & 81.9 & 58.6 && 81.9 & 86.0 & 81.5 & 58.1 & 0.2\\
                      \hline
                      \multirow{2}{4em}{n = 1000} & SCB & 97.5 & 74.0  && 97.5 & 98.0 & 97.4 & 73.0 & 0\\
                     & PCB & 81.0 & 39.0 && 81.2 & 84.9 & 80.9 & 39.0 & 0\\
                      \hline
                     \multirow{2}{4em}{n = 3000} & SCB & 97.0 & 26.0  && 97.2 & 97.9 & 96.9 & 25.4 & 0\\
                     & PCB & 81.0 & 6.1 && 81.4 & 84.9 & 81.1 & 6.2 & 0\\
                      \hline
                \end{tabular}
                }
            \end{center}
        \end{table}
        \begin{table}[h!]\label{table:simMVT}
            \caption{Empirical coverages of the simultaneous confidence bands (SCB) and pointwise confidence bands (PCB) for the OR- and DR-estimator in the presence of different types of model misspecification compared to a complete case analysis. Based on 1000 simulation replicates with multivariate t-distributed error terms. The nominal coverage level is 95\%.}
            \begin{center}
            \resizebox{0.95\textwidth}{!}{
                \begin{tabular}{l l c c  c c c c c c}
                      \hline
                      &  & \multicolumn{2}{l}{\textbf{OR-estimator}} & &\multicolumn{4}{l}{\textbf{DR-estimator}} & \textbf{Complete Cases}\\
                      \multicolumn{2}{c}{\textbf{Misspecification:}} & None & Outcome && None & Outcome & Missingn. & Both & - \\
                      \hline
                     \multirow{2}{4em}{n = 250} & SCB & 97.9 & 94.0 && 97.7 & 98.6 & 97.5 & 94.1 & 7.2\\
                     & PCB & 62.8 & 51.6 && 62.7 & 69.5 & 62.9 & 50.9 & 0.8\\
                      \hline 
                      \multirow{2}{4em}{n = 500} & SCB & 97.6 & 89.9 && 97.6 & 98.6 & 97.6 & 89.6 & 1.3\\
                     & PCB & 65.5 & 39.9 && 64.8 & 70.2 & 65.1 & 40.1 & 0\\
                      \hline
                      \multirow{2}{4em}{n = 1000} & SCB & 98.6 & 79.5 && 98.6 & 99.1 & 98.5 & 79.9 & 0\\
                     & PCB & 63.1 & 24.8 && 62.5 & 68.1 & 61.8 & 25.2 & 0 \\
                      \hline
                     \multirow{2}{4em}{n = 3000} & SCB & 98.1 & 31.0 && 98.1 & 98.8 & 98.1 & 30.8 & 0\\
                     & PCB & 63.2 & 2.6 && 62.5 & 70.0 & 62.6 & 2.4 & 0 \\
                      \hline
                    \end{tabular}            
            }

            \end{center}
        \end{table}
        \subsection{Results}
        Tables 1 and 2 display the empirical coverages of the pointwise and simultaneous confidence bands using a 95\% nominal level, for the Gaussian and t-distributed errors, respectively. 
        As expected, the 95\% pointwise confidence bands perform much worse overall, since they do not take the multitude of comparisons into account. 
        Their coverage deteriorates drastically with sample size, as would also be expected since sampling variation decreases.

        When the model is correctly specified, the simultaneous confidence bands based on the OR-estimator achieve an empirical coverage bounded below by the nominal level of 95\% (as expected from the theory for the Gaussian noise case; see (\ref{equ:coverage})). 
        The same holds for the simultaneous confidence bands based on the DR-estimator when at least one of the two nuisance models is correctly specified. Note that our assumption of an outcome once continuously differentiable almost surely is violated when simulating t-distributed errors, and yet the empirical coverages of the simultaneous confidence bands behave well. 
        When the estimators are asymptotically biased (the outcome model is misspecified for the OR-estimator, or both models are misspecified for the DR-estimator), the nominal coverage levels are not reached, as expected. The confidence bands for the naive estimate based on complete case analysis predictably perform badly, due to large bias when around 30\% of the curves are missing. 
        
        Figure \ref{fig:bandwidth} in Appendix \ref{figure.app} illustrates the width of the pointwise compared to the simultaneous confidence bands. Figures \ref{fig:bias_gp}-\ref{fig:mse_t} in the Appendix \ref{figure.app} display biases, variances, and MSE for the estimators in the different scenarios. 
        There we can see that, as expected, the bias of the DR estimator is negligible when one or both of the two working models are correctly specified, similarly for the OR estimator when the outcome model is correct. 
        When both models are misspecified, the DR estimator displays some bias, which varies between 0.17 and 0.38. 
        A similar bias is observed for the OR estimator when the outcome model is incorrect. 
        For comparison, the bias of the naive estimate varies between 0.2 and 1.39. 
        These results confirm the expected doubly robustness properties of the DR estimator.
    \section{Application: Answering a counterfactual question}\label{sec:app}
        To illustrate the introduced methods we revisit a cohort dataset including all people born in Sweden year 1954. 
        This data was used earlier in \cite{ecker2023causal} to contrast the lifetime income trajectories between those living at 20 years of age in large labour market regions compared with those living in small labour market regions. 
        Local labour markets are units defined by patterns of employment and commuting at the municipality level \cite{LA2,LA}. 

        Counterfactual questions are often of interest in the social sciences and here we focus on the following one for illustration purposes: what would be the average life income trajectory for this cohort if all had been living in a large labour market region at age 20 (i.e., in 1974)? 
        For this purpose, let $\cal Y$ denote the lifetime income trajectory of an individual between age 21 and age 63 (we can follow the cohort until year 2017 in the data). 
        We observe $\cal Y$ for those actually living in the labour market of one of the three biggest Swedish cities (Stockholm, Gothenburg, or Malmö, $Z=1$). 
        For those living in smaller labour markets ($Z=0$) we do not observe the counterfactual life income trajectory that they would have had if they had lived in the larger labour market. The functional parameter corresponding to the counterfactual question of interest is $\mu_{y}=E(\cal Y)$. 

        More precisely, the outcome of interest $\cal Y$ measured annually is the logarithm of the accumulated total earned income in SEK, defined as all taxable income (except capital). We adjust incomes for inflation to match the monetary value in 2017, discounting with a factor of 0.03 \cite{disk}. 
        
        We assume the missingness mechanism is ignorable ($\Pr[Z=1\mid X,{\cal Y}]=\Pr[Z=1\mid X]$), where $X$ contains the following covariates: \textit{sex};
        \textit{year of first income}, the year of first income if this occurs before exposure (i.e., before 1974); \textit{previous income}, the logarithmized cumulative income accumulated before exposure; \textit{children before age 20}, a binary indicator of having at least one child before exposure; and \textit{secondary education}, an indicator of completing (at least) upper secondary education. The latter is based on the subjects' highest achieved education level in 1990, the first year for which the variable is available annually. 
        In addition, we control for some family measures: \textit{number of siblings} (in 1973); \textit{parents' income} during adolescence (the logarithmized total for both parents for the years 1968-1971); and an indicator indicating if both parents are born outside of Sweden (\textit{foreign-born parents}). 
        We also control for the highest education level (lower secondary, upper secondary, or tertiary) achieved by either mother, father, or both, in 1970, yielding the indicator variables \textit{parents' secondary education} and \textit{parents' tertiary education}.
        Note that we excluded a total of around 4000 individuals due to missingness on one or more of the covariates, primarily related to parental characteristics. 
        The resulting data consists of 54485 individuals, of which 57.4\% have observed outcomes.

        \begin{figure}[h!]
            \begin{center}
            \includegraphics[width=0.9\linewidth]{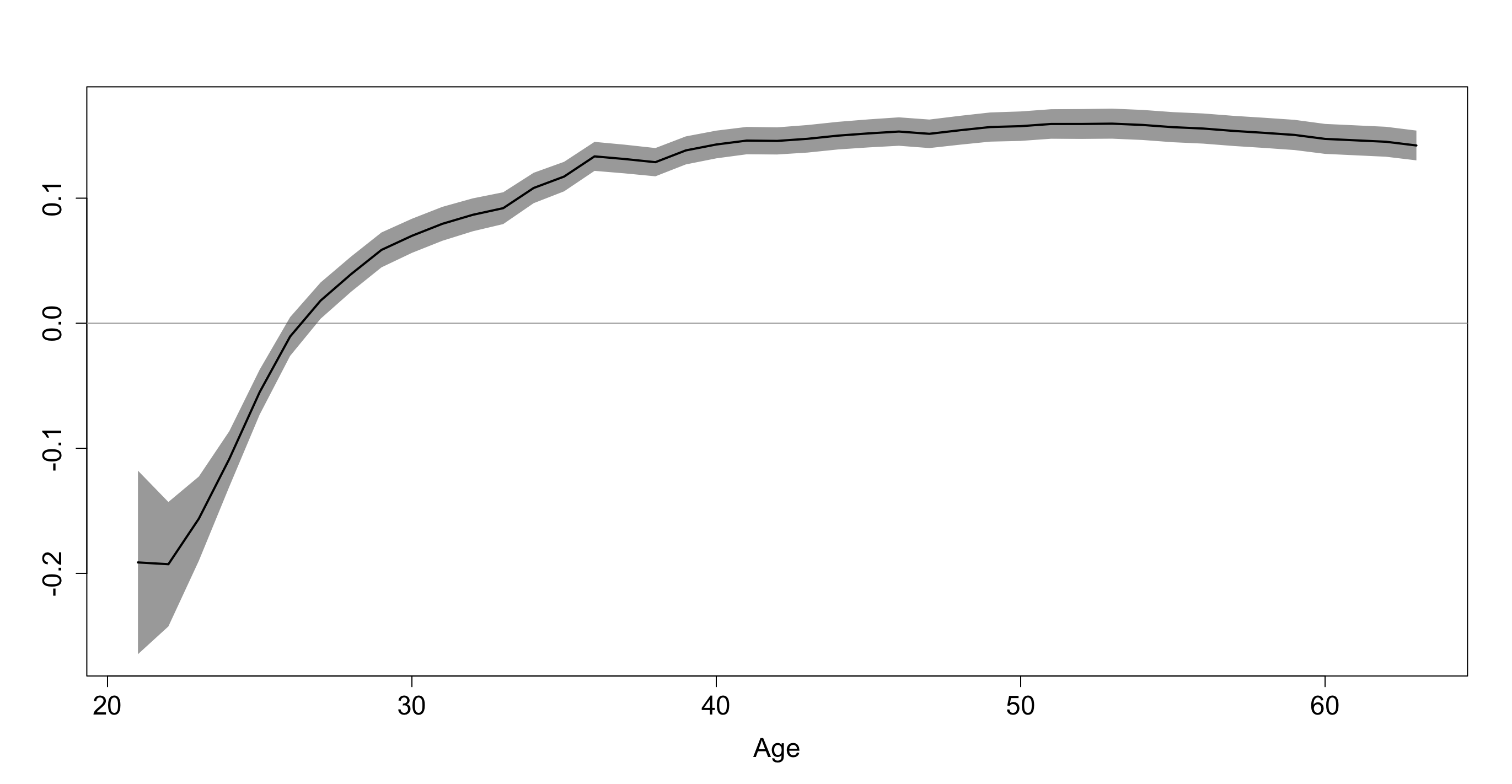} 
            \end{center}
            \caption{Solid line: $\hat \mu_{DR}-\widehat E({\cal Y} \mid Z=1)$; and shaded grey: simultaneous 95\% confidence bands.}
            \label{app.fig}
        \end{figure}

        We estimate $\mu_y$ with the doubly robust estimator. In Figure \ref{app.fig} we display the comparison $\hat \mu_{DR}-\widehat E({\cal Y} \mid Z=1)$, i.e. comparing $\hat \mu_{DR}$ to the life income trajectory of those that actually started their labour market career in one of the large city. 
        We see with this comparison that the naive estimation overestimates the income trajectory in the early years, but already before 30 years of age, it starts underestimating accumulated incomes. 
        Confidence bands are obtained using the results of Theorem \ref{thm:tight:DR} and (\ref{equ:coverage}) applied to the doubly robust estimator.

        \section{Discussion}
        In this paper, we have introduced and studied two semi-parametric estimators for the mean of functional outcomes in the presence of missing (at random) outcome data. 
        We have shown that both of these estimators (an outcome regression- and a doubly robust estimator) are asymptotically Gaussian processes, and presented expressions for their covariance functions. This enables us to quantify the uncertainty of these estimators by constructing simultaneous confidence bands with asymptotically guaranteed coverage. 
        These asymptotics translate into satisfactory finite sample performances in the Monte Carlo study conducted. 
        Further, the estimators studied here can be used in the context of causal inference from observational data, using the potential outcomes framework \cite{missingcausal}, as illustrated by the application in Section \ref{sec:app}.    

        A central assumption we have made is that the functional outcomes are missing at random. While this assumption is commonly required in the literature, it is nevertheless restrictive and cannot be tested without further assumptions \cite{molenberghs2015handbook}. 
        Future work could thus consider developing uncertainty intervals that take into account departures from the missing at random assumption \cite{vansteelandt2006ignorance, genback2015uncertainty}. 
        Other possible extensions include considering the targeted learning estimator \citep{tmle}, which is closely related to the doubly robust estimator, and more generally, developing semi-parametric theory in the context of functional outcomes so that a larger class of machine learning methods can be used to fit the nuisance models.


\section*{Acknowledgments}
We are thankful for comments by David Källberg and two anonymous referees that have improved the paper.
This work was funded by a grant from the Marianne and Marcus Wallenberg Foundation.

\bibliographystyle{plain}
\bibliography{ref}

\begin{thebibliography}{10}

\bibitem{Abram}
Konrad Abramowicz, Charlotte~K. Häger, Alessia Pini, Lina Schelin, Sara
  Sjöstedt~de Luna, and Simone Vantini.
\newblock Nonparametric inference for functional-on scalar linear models
  applied to knee kinematic hop data after injury of the anterior cruciate
  ligament.
\newblock {\em Scandinavian Journal of Statistics}, 45(4):1036--1061, 2018.

\bibitem{bang2005doubly}
Heejung Bang and James~M Robins.
\newblock Doubly robust estimation in missing data and causal inference models.
\newblock {\em Biometrics}, 61(4):962--973, 2005.

\bibitem{Belloni}
A.~Belloni, V.~Chernozhukov, I.~Fernández-Val, and C.~Hansen.
\newblock Program evaluation and causal inference with high-dimensional data.
\newblock {\em Econometrica}, 85(1):233 -- 298, 2017.

\bibitem{Bill79}
Patrick Billingsley.
\newblock {\em {Probability and Measure}}.
\newblock Wiley series in probability and mathematical statistics. New York,
  John Wiley \& Sons, Inc., 1979.

\bibitem{CANTONI2020108}
Eva Cantoni and Xavier {de Luna}.
\newblock Semiparametric inference with missing data: Robustness to outliers
  and model misspecification.
\newblock {\em Econometrics and Statistics}, 16:108--120, 2020.

\bibitem{dannenmaier2020application}
Julia Dannenmaier, Christina Kaltenbach, Theresa K{\"o}lle, and Gert Krischak.
\newblock Application of functional data analysis to explore movements:
  walking, running and jumping-a systematic review.
\newblock {\em Gait \& Posture}, 77:182--189, 2020.

\bibitem{delaigle2016approximating}
A~Delaigle and P~Hall.
\newblock Approximating fragmented functional data by segments of markov
  chains.
\newblock {\em Biometrika}, 103(4):779--799, 2016.

\bibitem{missingcausal}
Peng Ding and Fan Li.
\newblock Causal inference: A missing data perspective.
\newblock {\em Statistical Science}, 33(2):214--237, 2018.

\bibitem{ecker2023causal}
K.~Ecker, X.~de~Luna, and L.~Schelin.
\newblock Causal inference with a functional outcome.
\newblock {\em Journal of the Royal Statistical Society: Series C}, 73(1):221
  -- 240, 2024.

\bibitem{ecker2022regional}
Kreske Ecker, Xavier de~Luna, and Olle Westerlund.
\newblock Regional differences in initial labour market conditions and dynamics
  in lifetime income trajectories.
\newblock {\em Longitudinal and Life Course Studies}, 13(3):352 -- 379, 2022.

\bibitem{genback2019causal}
Minna Genb\"ack and Xavier de~Luna.
\newblock Causal inference accounting for unobserved confounding after outcome
  regression and doubly robust estimation.
\newblock {\em Biometrics}, 75(2):506--515, 2019.

\bibitem{genback2015uncertainty}
Minna Genb{\"a}ck, Elena Stanghellini, and Xavier de~Luna.
\newblock Uncertainty intervals for regression parameters with non-ignorable
  missingness in the outcome.
\newblock {\em Statistical Papers}, 56:829--847, 2015.

\bibitem{holland86}
Paul~W. Holland.
\newblock Statistics and causal inference.
\newblock {\em Journal of the American Statistical Association},
  81(396):945--960, 1986.

\bibitem{hsing2015theoretical}
Tailen Hsing and Randall Eubank.
\newblock {\em Theoretical foundations of functional data analysis, with an
  introduction to linear operators}.
\newblock Wiley Series in Probability and Statistics. John Wiley \& Sons, 1st
  edition, 2015.

\bibitem{disk}
Per-Olov Johansson and Bengt Kriström.
\newblock {\em {Cost–Benefit Analysis for Project Appraisal}}.
\newblock Cambridge, Cambridge University Press, 2016.

\bibitem{kang2007demystifying}
Joseph~DY Kang and Joseph~L Schafer.
\newblock Demystifying double robustness: A comparison of alternative
  strategies for estimating a population mean from incomplete data.
\newblock {\em Statistical Science}, 22(4):523--539, 2007.

\bibitem{LA2}
Charlie Karlsson and Michael Olsson.
\newblock {The identification of functional regions: theory, methods, and
  applications}.
\newblock {\em The Annals of Regional Science}, 40(1):1--18, 2006.

\bibitem{kneip2020optimal}
Alois Kneip and Dominik Liebl.
\newblock On the optimal reconstruction of partially observed functional data.
\newblock {\em The Annals of Statistics}, 48(3):1692--1717, 2020.

\bibitem{kraus2019inferential}
David Kraus.
\newblock Inferential procedures for partially observed functional data.
\newblock {\em Journal of Multivariate Analysis}, 173:583--603, 2019.

\bibitem{ffscb}
D.~Liebl and M.~Reimherr.
\newblock {\em { ffscb: fast n' fair simultaneous confidence bands for
  functional parameters}}, 2019.
\newblock R package version 0.0.10.

\bibitem{Liebl2}
D.~Liebl and M.~Reimherr.
\newblock {Simultaneous Inference for Function-valued Parameters: a Fast and
  Fair Approach}.
\newblock In G.~Aneiros, I.~Horová, M.~Hušková, and F.~Vieu, editors, {\em
  Functional and High-Dimensional Statistics and Related Fields}. Cham,
  Springer Nature Switzerland AG, 2020.

\bibitem{Liebl}
Dominik Liebl and Matthew Reimherr.
\newblock {Fast and fair simultaneous confidence bands for functional
  parameters}.
\newblock {\em Journal of the Royal Statistical Society Series B: Statistical
  Methodology}, 85(3):842--868, 2023.

\bibitem{little2019statistical}
Roderick~JA Little and Donald~B Rubin.
\newblock {\em Statistical Analysis with Missing Data}, volume 793 of {\em
  Wiley Series in Probability and Statistics}.
\newblock John Wiley \& Sons, 3rd edition, 2019.

\bibitem{molenberghs2015handbook}
Geert Molenberghs, Garrett Fitzmaurice, Michael~G Kenward, Anastasios Tsiatis,
  and Geert Verbeke.
\newblock {\em Handbook of Missing Data Methodology}.
\newblock CRC Press, 2015.

\bibitem{NEWEY19942111}
Whitney~K. Newey and Daniel McFadden.
\newblock {Large sample estimation and hypothesis testing}.
\newblock In {\em Handbook of Econometrics}, volume~4, pages 2111--2245.
  Elsevier, 1994.

\bibitem{Pini17}
Alessia Pini and Simone Vantini.
\newblock {Interval-wise testing for functional data}.
\newblock {\em Journal of Nonparametric Statistics}, 29(2):407--424, 2017.

\bibitem{robins1994estimation}
James~M Robins, Andrea Rotnitzky, and Lue~Ping Zhao.
\newblock Estimation of regression coefficients when some regressors are not
  always observed.
\newblock {\em Journal of the American Statistical Association},
  89(427):846--866, 1994.

\bibitem{Seaman:2018}
Shaun~R. Seaman and Stijn Vansteelandt.
\newblock {Introduction to Double Robust Methods for Incomplete Data}.
\newblock {\em Statistical Science}, 33(2):184 -- 197, 2018.

\bibitem{LA}
{Statistics Sweden}.
\newblock {\em {Lokala arbetsmarknader – egenskaper, utveckling och
  funktion}}.
\newblock Örebro, Statistics Sweden, 2010.

\bibitem{Tan}
Z.~Tan.
\newblock {Comment: Understanding OR, PS and DR}.
\newblock {\em Statistical Science}, 22:560--568, 2007.

\bibitem{telschow2022simultaneous}
Fabian~JE Telschow and Armin Schwartzman.
\newblock Simultaneous confidence bands for functional data using the gaussian
  kinematic formula.
\newblock {\em Journal of Statistical Planning and Inference}, 216:70--94,
  2022.

\bibitem{tsiatis2006semiparametric}
Anastasios~A Tsiatis.
\newblock {\em Semiparametric Theory and Missing Data}.
\newblock Springer Series in Statistics. Springer, 2006.

\bibitem{ullah2013applications}
Shahid Ullah and Caroline~F Finch.
\newblock Applications of functional data analysis: A systematic review.
\newblock {\em BMC Medical Research Methodology}, 13:1--12, 2013.

\bibitem{tmle}
M~J van~der Laan and S~Rose.
\newblock {\em Targeted Learning in Data Science: Causal Inference for Complex
  Longitudinal Studies}.
\newblock Springer Series in Statistics. Springer, 2018.

\bibitem{vansteelandt2006ignorance}
Stijn Vansteelandt, Els Goetghebeur, Michael~G Kenward, and Geert Molenberghs.
\newblock Ignorance and uncertainty regions as inferential tools in a
  sensitivity analysis.
\newblock {\em Statistica Sinica}, 16(3):953--979, 2006.

\end{thebibliography}

\newpage

\appendix
\noindent{\LARGE\textbf{Appendix}}
\section{Asymptotic results for finite-dimensional distributions}\label{finite.app} 
In addition to the estimators in \eqref{equ:OR} and \eqref{equ:DR}, we consider finite-dimensional analogs, where we evaluate the outcome function at $T$ points on $[0,1]$. 
    These finite-dimensional analogs are an important step in the proofs of our main results and are also needed for the feasible estimation presented in Section \ref{sec:feasible}.    
    It is equivalent to consider a random sample, $(\ObsRes, \ObsCov, Z_i)_{i=1}^n$, from the joint distribution of $(\textbf{y}, \textbf{x}, Z)$, where the outcome variable, $\textbf{y}$, is a $T \times 1$ random vector.
    Corresponding to working model \eqref{equ:regmod1}, we have the multivariate multiple regression,
        \begin{equation}\label{equ:regmod2}
            \ObsRes = \textbf{B}^{\top} \ObsCov + \boldsymbol{\epsilon}_i
        \end{equation}
    where $\textbf{B} = (\RegCoe_1, \dots, \RegCoe_T)$, $\RegCoe_j$ is a $p \times 1$ vector,  and $\boldsymbol{\epsilon}_i$ is a random vector with zero mean vectors and $\boldsymbol{\Sigma}_{\epsilon}$ as the covariance matrix.    
    The OR estimator of $\boldsymbol{\mu}_{y} = \mathbb{E}(\textbf{y})$ based on $(\ObsRes, \ObsCov, Z_i)_{i=1}^n$ and \eqref{equ:regmod2} is
        \begin{align}\label{equ:OR2}
            \hat{\boldsymbol{\mu}}_{OR} = n^{-1}\SumOverN \hat{\textbf{B}}^{\top}\ObsCov,
        \end{align}
    where $\hat{\textbf{B}}$ is the ordinary least squares estimator.
    Similarly, based on the two working models \eqref{equ:ps} and \eqref{equ:regmod2}, the corresponding doubly robust estimator is
        \begin{equation}\label{equ:DR2}
            \hat{\boldsymbol{\mu}}_{DR} = n^{-1} \SumOverN \left( \hat{\textbf{B}}^{\top}\ObsCov + \frac{Z_i}{\tau(\ObsCov^{\top}\hat{\LogCoe})}(\ObsRes -\hat{\textbf{B}}^{\top}\ObsCov)  \right),
        \end{equation}
    where $\hat{\LogCoe}$ again is the maximum likelihood estimator. The above estimators can be viewed as M-estimators and we assume throughout the usual regularity conditions for such estimators to be well-behaved asymptotically; see \cite{NEWEY19942111} and \citet[][Suppl Material]{CANTONI2020108}.
    \begin{lemma}\label{lemma:finite_DR}
       Assume that at least one of the two working models \eqref{equ:regmod1} and \eqref{equ:ps} is correctly specified, i.e., $E(\ObsRes \mid \ObsCov)= \textbf{B}^{\top} \ObsCov$ or $\Pr[ Z_i=1 | \ObsCov ]= \tau(\ObsCov^{\top} \LogCoe)$. Then,  $\sqrt{n}(\hat{\boldsymbol{\mu}}_{DR} - \boldsymbol{\mu}_y)$ has an asymptotic multivariate normal distribution with mean zero. In the case that both working models are correctly specified, the covariance matrix simplifies and we have:
       \begin{align*}
           \sqrt{n}(\hat{\boldsymbol{\mu}}_{DR} - \boldsymbol{\mu}_y) \overset{d}{\rightarrow} \mathcal{N}\left(\textbf{0}, \textbf{B}^{\top} \boldsymbol{\Sigma}_{x} \textbf{B} + \mathbb{E} [\tau^{-1}(\ObsCov^{\top}\LogCoe)] \boldsymbol{\Sigma}_{\epsilon}\right).
       \end{align*} 
    \end{lemma}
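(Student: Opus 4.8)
The plan is to treat $\hat{\boldsymbol{\mu}}_{DR}$ jointly with its two nuisance estimators as a single stacked M-estimator and to invoke the standard theory cited after \eqref{equ:DR2}. First I would collect into one parameter vector $\theta$ the target $\boldsymbol{\mu}_y$, the entries of $\textbf{B}$, and $\LogCoe$, and stack three estimating equations: the logistic score $\Parentheses{Z_i - \tau(\ObsCov^{\top}\LogCoe)}\ObsCov$ that defines $\hat{\LogCoe}$, the weighted complete-case normal equations $Z_i\ObsCov\Parentheses{\ObsRes - \textbf{B}^{\top}\ObsCov}^{\top}$ that define $\hat{\textbf{B}}$, and the moment condition $\psi_\mu = \textbf{B}^{\top}\ObsCov + \frac{Z_i}{\tau(\ObsCov^{\top}\LogCoe)}\Parentheses{\ObsRes - \textbf{B}^{\top}\ObsCov} - \boldsymbol{\mu}_y$ that defines $\hat{\boldsymbol{\mu}}_{DR}$. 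Under the imported regularity conditions, $\hat{\LogCoe}$ and $\hat{\textbf{B}}$ converge to well-defined probability limits $\LogCoe^{\star}$ and $\textbf{B}^{\star}$ (the true values when the corresponding model is correct, otherwise the least-false values), and the M-estimation central limit theorem delivers joint asymptotic normality of $\sqrt{n}\Parentheses{\hat\theta - \theta^{\star}}$.

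The decisive step is to show that the $\boldsymbol{\mu}_y$-block of $\theta^{\star}$ equals the true functional mean, so that the corresponding marginal limit is centred. Writing $m(\ObsCov) = \mathbb{E}(\ObsRes\mid\ObsCov)$ and $\pi(\ObsCov) = \Pr[Z_i=1\mid\ObsCov]$, using the missing-at-random identity $\mathbb{E}(Z_i\ObsRes\mid\ObsCov) = \pi(\ObsCov)m(\ObsCov)$ and the fact that $\boldsymbol{\mu}_y = \mathbb{E}\Parentheses{m(\ObsCov)}$, a direct calculation at the true target value gives
\[
    \mathbb{E}(\psi_\mu) = \mathbb{E}\Bracket{\Parentheses{1 - \frac{\pi(\ObsCov)}{\tau(\ObsCov^{\top}\LogCoe^{\star})}}\Parentheses{\textbf{B}^{\star\top}\ObsCov - m(\ObsCov)}}.
\]
This expectation vanishes as soon as one of its two factors does: if the outcome model is correct then $\textbf{B}^{\star\top}\ObsCov = m(\ObsCov)$, while if the propensity model is correct then $\tau(\ObsCov^{\top}\LogCoe^{\star}) = \pi(\ObsCov)$. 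Hence, under the stated hypothesis the moment condition is unbiased at $\theta^{\star}$, which simultaneously pins the limit of $\hat{\boldsymbol{\mu}}_{DR}$ at $\boldsymbol{\mu}_y$ and forces the mean-zero conclusion of the lemma.

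For the simplified covariance when both models hold, I would use the stacked sandwich formula, which expresses the influence function of $\hat{\boldsymbol{\mu}}_{DR}$ as $\psi_\mu$ minus corrections propagated through the two nuisance scores, weighted by the Jacobians of $\mathbb{E}(\psi_\mu)$ with respect to $\textbf{B}$ and $\LogCoe$ evaluated at $\theta^{\star}$. Differentiating the displayed expectation shows that the Jacobian in $\textbf{B}$ retains the factor $1 - \pi/\tau$, which is zero when the propensity model holds, and the Jacobian in $\LogCoe$ retains the factor $\textbf{B}^{\star\top}\ObsCov - m(\ObsCov)$, which is zero when the outcome model holds; when both hold, both cross Jacobians vanish (Neyman orthogonality) and the influence function reduces to $\psi_\mu$ alone. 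It then remains to evaluate $\Cov(\psi_\mu)$ at the truth: decomposing $\psi_\mu = \Parentheses{\textbf{B}^{\top}\ObsCov - \boldsymbol{\mu}_y} + \tfrac{Z_i}{\pi(\ObsCov)}\boldsymbol{\epsilon}_i$ with $\boldsymbol{\epsilon}_i = \ObsRes - \textbf{B}^{\top}\ObsCov$, the two summands are uncorrelated because $\mathbb{E}(\boldsymbol{\epsilon}_i\mid\ObsCov)=0$ and $Z_i\perp\boldsymbol{\epsilon}_i\mid\ObsCov$; the first contributes $\textbf{B}^{\top}\boldsymbol{\Sigma}_{x}\textbf{B}$, and the second, using $Z_i^2 = Z_i$ and homoscedasticity, contributes $\mathbb{E}\Bracket{\tau^{-1}(\ObsCov^{\top}\LogCoe)}\boldsymbol{\Sigma}_{\epsilon}$, which is exactly the stated covariance matrix.

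I expect the main obstacle to lie not in the orthogonality algebra but in verifying that the possibly misspecified nuisance estimators admit well-defined probability limits $\LogCoe^{\star}$ and $\textbf{B}^{\star}$ at which the stacked estimating function is smooth with invertible diagonal Jacobian blocks, so that a single M-estimation central limit theorem applies uniformly across all four specification regimes. This is precisely the content of the regularity conditions imported from \cite{NEWEY19942111} and \citet[][Suppl Material]{CANTONI2020108}, and the remaining work, chiefly the bookkeeping associated with the matrix-valued coefficient $\textbf{B}$, is routine.
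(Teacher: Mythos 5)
Your proposal follows essentially the same route as the paper's proof: both stack the DR moment condition $\psi_\mu$ with the complete-case least-squares equations for $\textbf{B}$ and the logistic score for $\LogCoe$, apply the M-estimation sandwich formula, exploit the vanishing of the cross Jacobian blocks under correct specification (your Neyman-orthogonality step is exactly the paper's block-diagonal expected Jacobian), and compute $\mathbb{E}[\psi_\mu\psi_\mu^{\top}] = \textbf{B}^{\top}\boldsymbol{\Sigma}_x\textbf{B} + \mathbb{E}[\tau^{-1}(\ObsCov^{\top}\LogCoe)]\boldsymbol{\Sigma}_\epsilon$ via the same decomposition into the $\textbf{B}^{\top}\ObsCov - \boldsymbol{\mu}_y$ and $\frac{Z_i}{\tau}\boldsymbol{\epsilon}_i$ parts. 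If anything, your explicit verification that $\mathbb{E}(\psi_\mu)$ factors as $\mathbb{E}\bigl[\bigl(1 - \pi/\tau\bigr)\bigl(\textbf{B}^{\star\top}\ObsCov - m(\ObsCov)\bigr)\bigr]$, which pins down the mean-zero claim under either specification, is spelled out more carefully than in the paper, where this unbiasedness is left implicit in the M-estimation framework.
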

    \begin{proof}       
        Based on the two models, the doubly robust estimator of the expected value of the outcome variables, $\boldsymbol{\mu}_{y} = \mathbb{E}(\textbf{y})$, is defined as
            \begin{equation*}
                \hat{\boldsymbol{\mu}}_{DR} = n^{-1} \SumOverN \left( \hat{\textbf{B}}^{\top}\ObsCov + \frac{Z_i}{\tau(\ObsCov^{\top}\hat{\LogCoe})}(\ObsRes -\hat{\textbf{B}}^{\top}\ObsCov)  \right),
            \end{equation*}
        where $\hat{\textbf{B}}$ and $\hat{\LogCoe}$ are maximum likelihood estimators.
        We apply the M-estimation approach to derive the asymptotic distribution of the doubly robust estimator.
        To do so, we first define the vector equations $G_n(\boldsymbol{\theta}) = n^{-1}\SumOverN{\psi_i(D_i, \boldsymbol{\theta})}$, where $D_i$ denotes the data of $i$th observation, and $\boldsymbol{\theta} = (\boldsymbol{\mu}_y, \vect(\textbf{B}), \LogCoe)^{\top}$ denotes the vector of all parameters. 
        To simplify the notations, we ignore $D_i$ in the equation $\psi_i$.
        The estimator $\hat{\boldsymbol{\theta}}$, which is the solution of the vector equations $G_n(\boldsymbol{\theta}) = 0$, has the following limiting distribution: 
            \begin{equation*}
                \sqrt{n}(\hat{\boldsymbol{\theta}} - \boldsymbol{\theta}_0) \to \mathcal{MN}(\textbf{0}, \boldsymbol{\Sigma}_{\theta}),
            \end{equation*}
        where $\boldsymbol{\Sigma}_{\theta} = \mathbb{E} \left( \frac{\partial \psi_i(\boldsymbol{\theta})}{ \partial \boldsymbol{\theta}} \biggr\rvert_{\boldsymbol{\theta}_0} \right)^{-1} 
                    \mathbb{E} [ \psi_i(\boldsymbol{\theta}_0)\psi_i(\boldsymbol{\theta}_0)^{\top} ]
                    \left\{ \mathbb{E} \left( \frac{\partial \psi_i(\boldsymbol{\theta})}{ \partial \boldsymbol{\theta}} \biggr\rvert_{\boldsymbol{\theta}_0} \right)^{-1} \right\}^{\top}$. \\                
        In order to define the estimating equations in terms of $\vect(\textbf{B})$, we need to introduce an equivalent representation of the model (\ref{equ:regmod2}), which is:
            \begin{equation*}
                \ObsRes = (\IdenMat_{T \times T} \otimes \ObsCov^{\top})\vect(\textbf{B}) + \boldsymbol{\epsilon}_i.
            \end{equation*}
        Based on this representation we can define the following equations, which together make up $\psi_i(\boldsymbol{\theta})$:
            \begin{equation}\label{equ:psi1}
                \psi_{1,i}(\boldsymbol{\theta}) = (\IdenMat_{T \times T} \otimes \ObsCov^{\top})\vect(\textbf{B}) + \frac{Z_i}{\tau(\ObsCov^{\top}\LogCoe)}(\textbf{y}_i - (\IdenMat_{T \times T} \otimes \ObsCov^{\top})\vect(\textbf{B})) - \boldsymbol{\mu}_{y}
            \end{equation}    
            \begin{equation}\label{equ:psi2}
                \psi_{2,i}(\boldsymbol{\theta}) = (\ObsRes - (\IdenMat_{T \times T} \otimes \ObsCov^{\top})\vect(\textbf{B})) \otimes Z_i \ObsCov
            \end{equation}    
            \begin{equation}\label{equ:psi3}
                \psi_{3,i}(\boldsymbol{\theta}) = \frac{Z_i - \tau(\ObsCov^{\top} \LogCoe)}{\tau(\ObsCov^{\top} \LogCoe)(1-\tau(\ObsCov^{\top} \LogCoe))} \frac{\partial \tau(\ObsCov^{\top} \LogCoe)}{ \partial \LogCoe}.
            \end{equation}
        Note: (\ref{equ:psi1}), (\ref{equ:psi2}) and (\ref{equ:psi3}) all are vector valued functions and the dimensions are $T \times 1$, $Tp \times 1$, and $p \times 1$, respectively.
        Calculate the Jacobian of $\psi_i(\boldsymbol{\theta})$ and evaluate it at the true value $\boldsymbol{\theta}_0$.
        The first block of rows of the Jacobian is
            \begin{equation*}
                \frac{\partial \psi_{1,i}(\boldsymbol{\theta})}{ \partial \boldsymbol{\theta}^{\top} } \biggr\rvert_{\boldsymbol{\theta}_0}  
                = \left[ - \IdenMat_{T \times T}, \frac{\tau(\ObsCov^{\top} \LogCoe_0)- Z_i}{\tau(\ObsCov^{\top} \LogCoe_0 )} (\IdenMat_{T \times T} \otimes \ObsCov^{\top}), -Z_i e^{-\ObsCov^{\top} \LogCoe_0} \boldsymbol{\epsilon}_i \ObsCov^{\top} \right].
            \end{equation*}        
        The second block of rows is
            \begin{equation*}
                \frac{\partial \psi_{2,i}(\boldsymbol{\theta})}{ \partial \boldsymbol{\theta}^{\top} } \biggr\rvert_{\boldsymbol{\theta}_0} 
                = \left[ \textbf{0}_{Tp \times T},  Z_i \IdenMat_{T \times T} \otimes \textbf{x}_i\textbf{x}_i^{\top} , \textbf{0}_{p \times p} \right].
            \end{equation*}
        The last block of rows is
            \begin{equation*}
                \frac{\partial \psi_{3,i}(\boldsymbol{\theta})}{ \partial \boldsymbol{\theta}^{\top} } \biggr\rvert_{\boldsymbol{\theta}_0} 
                = \left[ \textbf{0}_{p \times T}, \textbf{0}_{p \times Tp}, \tau(\ObsCov^{\top} \LogCoe_0)(1-\tau(\ObsCov^{\top} \LogCoe_0)) \ObsCov\ObsCov^{\top}  \right].
            \end{equation*}
        
        If both working models are correctly specified, the expected value of the Jacobian is 
            \begin{equation*}
                \mathbb{E} \left( \frac{\partial \psi_i(\boldsymbol{\theta})}{ \partial \boldsymbol{\theta}} \biggr\rvert_{\boldsymbol{\theta}_0} \right) 
                    = \begin{bmatrix}
                    -\IdenMat_{T \times T} & \textbf{0}_{T \times Tp} & \textbf{0}_{T \times p}\\ 
                    \textbf{0}_{Tp \times T} & \IdenMat_{T \times T} \otimes \boldsymbol{\Pi}_{p \times p} & \textbf{0}_{p \times p}\\ 
                    \textbf{0}_{p \times T} & \textbf{0}_{p \times Tp} & \boldsymbol{\Phi}_{p \times p}
                    \end{bmatrix},
            \end{equation*}
        where $\boldsymbol{\Pi} = \mathbb{E}\left[ Z_i\textbf{x}_i\textbf{x}_i^{\top} \right]$ and $\boldsymbol{\Phi} = \mathbb{E}\left[\tau(\ObsCov^{\top} \LogCoe_0)(1-\tau(\ObsCov^{\top} \LogCoe_0)) \ObsCov\ObsCov^{\top}\right]$.
        To see why this is the case, note that if the missingness model \eqref{equ:ps} is correctly specified, the second block in the first row of the Jacobian simplifies as follows: 
            \begin{align*}
                & \mathbb{E}\left(\frac{\tau(\ObsCov^{\top} \LogCoe_0)- Z_i}{\tau(\ObsCov^{\top} \LogCoe_0)} (\IdenMat_{T \times T} \otimes \ObsCov^{\top})\right) \\
                &= \mathbb{E}_{\textbf{x}} \left( \mathbb{E} \left( \frac{\tau(\ObsCov^{\top} \LogCoe_0)- Z_i}{\tau(\ObsCov^{\top} \LogCoe_0)} (\IdenMat_{T \times T} \otimes \ObsCov^{\top}) \biggr\rvert \ObsCov \right) \right) = 0
            \end{align*}
        If the outcome model \eqref{equ:regmod2} is correctly specified, the third block in the first row of the Jacobian simplifies as follows:
            \begin{equation*}
                \mathbb{E} \left( -Z_i e^{-\ObsCov^{\top} \LogCoe_0} \boldsymbol{\epsilon}_i \ObsCov^{\top} \right) = \mathbb{E}\boldsymbol{\epsilon}_i \mathbb{E}\left( -Z_i e^{-\ObsCov^{\top} \LogCoe_0} \ObsCov^{\top} \right) = \textbf{0}. \\
            \end{equation*}    
        Next, calculate $\mathbb{E} [ \psi_i(\boldsymbol{\theta}_0)\psi_i(\boldsymbol{\theta}_0)^{\top} ] $.
        Still assuming both working models are correctly specified, we have: 
            \begin{align*}
                &\mathbb{E} [ \psi_{1,i}(\boldsymbol{\theta}_0) \psi_{1,i}(\boldsymbol{\theta}_0)^{\top} ] \\ 
                &= \Cov\left[ (\IdenMat_{T \times T} \otimes \ObsCov^{\top})\vect(\textbf{B}_0) \right] + \mathbb{E} \left[ \frac{Z_i}{\tau^2(\ObsCov^{\top}\LogCoe_0)} \boldsymbol{\epsilon}_i\boldsymbol{\epsilon}_i^{\top} \right] \\
                &= \textbf{B}^{\top}_0 \boldsymbol{\Sigma}_{x} \textbf{B}_0 + \mathbb{E} [\tau^{-1}(\ObsCov^{\top}\LogCoe_0)] \boldsymbol{\Sigma}_{\epsilon},
            \end{align*}
        where 
            \begin{align*}
                \mathbb{E}\left[ \frac{1}{\tau(\ObsCov^{\top}\LogCoe_0)} \right] = \frac{1}{\pi}\left( 1 - \frac{\mathbb{V}(\tau(\ObsCov^{\top}\LogCoe_0))}{ [\mathbb{E}\tau(\ObsCov^{\top}\LogCoe_0)]^2 } \right) = \frac{1}{\pi} \left(1 - \frac{\mathbb{V}(\tau(\ObsCov^{\top}\LogCoe_0))}{\pi^2} \right),
            \end{align*}
            with $\pi=E(Z_i)$,
            and
            \begin{align*}
                \mathbb{V}(\tau(\ObsCov^{\top}\LogCoe_0)) &= \mathbb{E}_{\textbf{x}}[ \tau(\ObsCov^{\top}\LogCoe_0)^2 ] - \pi^2.
            \end{align*}
        Next,        
            \begin{align*}
                &
                \mathbb{E} [ \psi_{1,i}(\boldsymbol{\theta}_0) \psi_{2,i}(\boldsymbol{\theta}_0)^{\top} ] \\
                &= \mathbb{E} \left[ \left( (\IdenMat_{T \times T} \otimes \ObsCov^{\top})\vect(\textbf{B}_0) - \boldsymbol{\mu}_y^0 + \frac{Z_i}{\tau(\ObsCov^{\top}\LogCoe_0)}\boldsymbol{\epsilon}_i \right) \left( \boldsymbol{\epsilon}_i \otimes Z_i \ObsCov \right)^{\top} \right] \\
                &= \mathbb{E} \left[ \frac{Z_i}{\tau(\ObsCov^{\top}\LogCoe_0)} \boldsymbol{\epsilon}_i  \left( \boldsymbol{\epsilon}_i \otimes Z_i \ObsCov \right)^{\top} \right] 
                = \mathbb{E} \left[ \frac{Z_i}{\tau(\ObsCov^{\top}\LogCoe_0)}  ( \boldsymbol{\epsilon}_i \boldsymbol{\epsilon}_i^{\top} \otimes \ObsCov^{\top} ) \right] \\
                &= \boldsymbol{\Sigma}_{\epsilon} \otimes\boldsymbol{\mu}_x^{\top}
            \end{align*}
        and 
            \begin{align*}
                &\mathbb{E} [ \psi_{1,i}(\boldsymbol{\theta}_0) \psi_{3,i}(\boldsymbol{\theta}_0)^{\top} ] \\
                &= \mathbb{E} \left[ \left( (\IdenMat_{T \times T} \otimes \ObsCov^{\top})\vect(\textbf{B}_0) - \boldsymbol{\mu}_y^0 + \frac{Z_i}{\tau(\ObsCov^{\top}\LogCoe)}\boldsymbol{\epsilon}_i \right) (Z_i - \tau(\ObsCov^{\top}\LogCoe)) \ObsCov^{\top} \right]
                = \textbf{0}. \\
            \end{align*}
        Next,
            \begin{align*}
                &
                \mathbb{E} [ \psi_{2,i}(\boldsymbol{\theta}_0) \psi_{2,i}(\boldsymbol{\theta}_0)^{\top} ] \\
                &= \mathbb{E} \left[  (\boldsymbol{\epsilon}_i\otimes Z_i\ObsCov) (\boldsymbol{\epsilon}_i\otimes Z_i\ObsCov)^{\top} \right]
                = \mathbb{E}  \boldsymbol{\epsilon}_i\boldsymbol{\epsilon}_i^{\top} \otimes \left[ Z_i\ObsCov\ObsCov^{\top}  \right] \\
                &=\mathbb{E}\left[\boldsymbol{\epsilon}_i\boldsymbol{\epsilon}_i^{\top} \right] \otimes \mathbb{E} \left[Z_i \ObsCov\ObsCov^{\top}\right] 
                =   \boldsymbol{\Sigma}_{\epsilon} \otimes \boldsymbol{\Pi} 
            \end{align*}
        and 
            \begin{align*}
                \mathbb{E} [ \psi_{2,i}(\boldsymbol{\theta}_0) \psi_{3,i}(\boldsymbol{\theta}_0)^{\top} ] 
                =  \mathbb{E} \left[  ( \boldsymbol{\epsilon}_i \otimes Z_i\ObsCov ) (Z_i - \tau(\ObsCov^{\top} \LogCoe))\ObsCov^{\top} \right] = \textbf{0}.
            \end{align*}
        Since $\hat{\LogCoe}$ is a maximum likelihood estimator, the blocks in $\mathbb{E} [ \psi_{3,i}(\boldsymbol{\theta}_0)\psi_{3,i}(\boldsymbol{\theta}_0)^{\top} ] $ are identical to the expected value of the corresponding blocks in the Jacobian.
        According to the M-estimation approach, we have $\sqrt{n}( \hat{\boldsymbol{\theta}} -\boldsymbol{\theta} ) \overset{d}{\rightarrow} \mathcal{N}(\textbf{0}, \boldsymbol{\Sigma_\theta})$,
        where, by the sandwich estimator formula, the asymptotic variance of $\hat{\boldsymbol{\theta}}$ is
            \begin{align*}
                &  \mathbb{E} \left( \frac{\partial \psi_i(\boldsymbol{\theta})}{ \partial \boldsymbol{\theta}} \biggr\rvert_{\boldsymbol{\theta}_0} \right)^{-1} 
                    \mathbb{E} [ \psi_i(\boldsymbol{\theta}_0)\psi_i(\boldsymbol{\theta}_0)^{\top} ]
                    \left\{ \mathbb{E} \left( \frac{\partial \psi_i(\boldsymbol{\theta})}{ \partial \boldsymbol{\theta}} \biggr\rvert_{\boldsymbol{\theta}_0} \right)^{-1} \right\}^{\top} \\
                &=  \begin{bmatrix}
                        \textbf{B}^{\top}_0 \boldsymbol{\Sigma}_{x} \textbf{B}_0 + \mathbb{E} [\tau^{-1}(\ObsCov^{\top}\LogCoe_0)] \boldsymbol{\Sigma}_{\epsilon} &  -\boldsymbol{\Sigma}_{\epsilon} \otimes \boldsymbol{\mu}_x^{\top} \boldsymbol{\Pi}^{-1}  & \textbf{0} \\ 
                        \boldsymbol{\Sigma}_{\epsilon} \otimes \boldsymbol{\Pi}^{-1}\boldsymbol{\mu}_x & \boldsymbol{\Sigma}_{\epsilon} \otimes \boldsymbol{\Pi}^{-1} & \textbf{0} \\ 
                        \textbf{0} & \textbf{0} & \boldsymbol{\Phi}^{-1} 
                    \end{bmatrix}.
            \end{align*}
        Then this lemma is complete by the delta method.
    \end{proof}
    \begin{lemma}\label{lemma:finite_OR}
       Assume the working model \eqref{equ:regmod1} is correctly specified, i.e., $E(\ObsRes \mid \ObsCov)= \textbf{B}^{\top} \ObsCov$. Then, the OR estimator \eqref{equ:OR2} has the following asymptotic multivariate normal distribution:
       \begin{align*}
           \sqrt{n}(\hat{\boldsymbol{\mu}}_{OR} - \boldsymbol{\mu}_y) \overset{d}{\rightarrow} \mathcal{N}\left(\textbf{0}, \textbf{B}^{\top} \boldsymbol{\Sigma}_{x} \textbf{B} + \boldsymbol{\mu}_x^{\top} \boldsymbol{\Pi}^{-1} \boldsymbol{\mu}_x \boldsymbol{\Sigma}_{\epsilon} \right),
       \end{align*}  
       where $\boldsymbol{\Pi} = \mathbb{E}\left[ Z_i\textbf{x}_i\textbf{x}_i^{\top} \right]$.
    \end{lemma}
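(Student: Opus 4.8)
The plan is to treat the finite-dimensional OR estimator \eqref{equ:OR2} as an M-estimator and mirror the argument of Lemma \ref{lemma:finite_DR}, which here becomes strictly simpler because no propensity parameter enters. I would collect the parameters into $\boldsymbol{\theta} = (\boldsymbol{\mu}_y, \vect(\textbf{B}))^{\top}$ and stack two estimating functions: an (unweighted) imputation equation $\psi_{1,i}(\boldsymbol{\theta}) = (\IdenMat_{T\times T}\otimes \ObsCov^{\top})\vect(\textbf{B}) - \boldsymbol{\mu}_y$, which defines the estimand as the population average of the fitted mean, and the complete-case least-squares normal equations $\psi_{2,i}(\boldsymbol{\theta}) = (\ObsRes - (\IdenMat_{T\times T}\otimes \ObsCov^{\top})\vect(\textbf{B}))\otimes Z_i\ObsCov$, which at $\boldsymbol{\theta}_0$ reduces to $\boldsymbol{\epsilon}_i \otimes Z_i\ObsCov$ under correct specification of \eqref{equ:regmod2}. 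Note that no model for $\Pr[Z_i=1\mid\ObsCov]$ is posited, so only correctness of the outcome model is used; $\boldsymbol{\Pi}=\mathbb{E}(Z_i\ObsCov\ObsCov^{\top})$ involves the true law of $Z$. Under the usual regularity conditions, \cite{NEWEY19942111} yields $\sqrt{n}(\hat{\boldsymbol{\theta}}-\boldsymbol{\theta}_0)\Converges{d}\mathcal{N}(\textbf{0},\boldsymbol{\Sigma}_\theta)$ with sandwich form $\boldsymbol{\Sigma}_\theta = A^{-1}B(A^{-1})^{\top}$, where $A = \mathbb{E}(\partial\psi_i/\partial\boldsymbol{\theta}\rvert_{\boldsymbol{\theta}_0})$ and $B = \mathbb{E}[\psi_i(\boldsymbol{\theta}_0)\psi_i(\boldsymbol{\theta}_0)^{\top}]$; the claimed covariance is then the leading $T\times T$ block of $\boldsymbol{\Sigma}_\theta$ (a delta-method projection onto the $\boldsymbol{\mu}_y$ coordinates).

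Next I would compute the two ingredients. Differentiating and taking expectations gives the block upper-triangular bread matrix
\begin{equation*}
 A = \begin{bmatrix} -\IdenMat_{T\times T} & \IdenMat_{T\times T}\otimes \boldsymbol{\mu}_x^{\top} \\ \textbf{0}_{Tp\times T} & \IdenMat_{T\times T}\otimes \boldsymbol{\Pi} \end{bmatrix},
\end{equation*}
using $\mathbb{E}(\ObsCov)=\boldsymbol{\mu}_x$. For the meat matrix, $\mathbb{E}[\psi_{1,i}\psi_{1,i}^{\top}] = \Cov(\textbf{B}_0^{\top}\ObsCov) = \textbf{B}_0^{\top}\boldsymbol{\Sigma}_x\textbf{B}_0$ and $\mathbb{E}[\psi_{2,i}\psi_{2,i}^{\top}] = \mathbb{E}(\boldsymbol{\epsilon}_i\boldsymbol{\epsilon}_i^{\top}\otimes Z_i\ObsCov\ObsCov^{\top}) = \boldsymbol{\Sigma}_\epsilon\otimes\boldsymbol{\Pi}$, the factorisation using that correct specification of \eqref{equ:regmod2} makes $\boldsymbol{\epsilon}_i$ mean-zero and independent of $(Z_i,\ObsCov)$. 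The key simplification is that the cross block vanishes, $\mathbb{E}[\psi_{1,i}\psi_{2,i}^{\top}] = \textbf{0}$: conditioning on $(\ObsCov,Z_i)$ leaves the deterministic factor $\textbf{B}_0^{\top}\ObsCov-\boldsymbol{\mu}_y$ times $\mathbb{E}(\boldsymbol{\epsilon}_i\mid\ObsCov,Z_i)=\mathbb{E}(\boldsymbol{\epsilon}_i\mid\ObsCov)=\textbf{0}$, so $B$ is block diagonal.

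Finally I would assemble the sandwich. Inverting the block-triangular bread gives
\begin{equation*}
 A^{-1} = \begin{bmatrix} -\IdenMat_{T\times T} & \IdenMat_{T\times T}\otimes \boldsymbol{\mu}_x^{\top}\boldsymbol{\Pi}^{-1} \\ \textbf{0} & \IdenMat_{T\times T}\otimes \boldsymbol{\Pi}^{-1} \end{bmatrix},
\end{equation*}
and, since $B$ is block diagonal, the leading block of $A^{-1}B(A^{-1})^{\top}$ is $\textbf{B}_0^{\top}\boldsymbol{\Sigma}_x\textbf{B}_0 + (\IdenMat_T\otimes\boldsymbol{\mu}_x^{\top}\boldsymbol{\Pi}^{-1})(\boldsymbol{\Sigma}_\epsilon\otimes\boldsymbol{\Pi})(\IdenMat_T\otimes\boldsymbol{\Pi}^{-1}\boldsymbol{\mu}_x)$. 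Collapsing the Kronecker products via $(M\otimes N)(C\otimes D)=MC\otimes ND$ reduces the second summand to $\boldsymbol{\Sigma}_\epsilon\otimes(\boldsymbol{\mu}_x^{\top}\boldsymbol{\Pi}^{-1}\boldsymbol{\mu}_x)=(\boldsymbol{\mu}_x^{\top}\boldsymbol{\Pi}^{-1}\boldsymbol{\mu}_x)\,\boldsymbol{\Sigma}_\epsilon$, the scalar $\boldsymbol{\mu}_x^{\top}\boldsymbol{\Pi}^{-1}\boldsymbol{\mu}_x$ factoring out, which delivers exactly the stated covariance. The step I expect to require the most care is the bookkeeping of these Kronecker contractions and the placement and signs of the blocks in $A^{-1}$; as an independent check one can instead expand $\sqrt{n}(\hat{\boldsymbol{\mu}}_{OR}-\boldsymbol{\mu}_y)$ directly as in \eqref{OR}, apply the law of large numbers to $\bar{\textbf{x}}_n$ and to $n^{-1}\SumOverN Z_i\ObsCov\ObsCov^{\top}$, the multivariate central limit theorem to the $\bar{\textbf{x}}_n$-term and to $\boldsymbol{\zeta}$ separately, and Slutsky's theorem, the two summands being asymptotically independent because $\boldsymbol{\epsilon}_i\perp\ObsCov$.
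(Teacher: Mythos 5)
Your proposal is correct and takes essentially the same route as the paper's own proof: the identical M-estimation setup with $\boldsymbol{\theta}=(\boldsymbol{\mu}_y,\vect(\textbf{B}))^{\top}$, the same two estimating equations \eqref{equ:psi4}--\eqref{equ:psi5}, the same sandwich formula, and the same extraction of the leading $T\times T$ block via the delta method (you are in fact more explicit than the paper in verifying that the meat matrix is block diagonal, using $\mathbb{E}(\boldsymbol{\epsilon}_i\mid\ObsCov,Z_i)=\textbf{0}$). The one discrepancy is the sign of the lower-right bread block, which should be $-\IdenMat_{T\times T}\otimes\boldsymbol{\Pi}$ (the derivative of the residual in $\psi_{2,i}$ with respect to $\vect(\textbf{B})$ is $-\IdenMat_{T\times T}\otimes Z_i\ObsCov\ObsCov^{\top}$, as in the paper's OR proof), but since this sign propagates consistently into both nonzero blocks of $A^{-1}$ it cancels in $A^{-1}B(A^{-1})^{\top}$ and leaves the stated covariance unchanged.
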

    \begin{proof}
        Here, we apply the same approach to find the limiting distribution of outcome regression imputed estimator.
        The outcome regression imputed estimator is defined as $\hat{\boldsymbol{\mu}}_{OR} = n^{-1}\SumOverN \hat{\textbf{B}}^{\top}\ObsCov$. Let $\boldsymbol{\theta} = (\boldsymbol{\mu}_y, \vect(\textbf{B}))^{\top}$.
        Define equations for calculating the estimator as:
            \begin{equation}\label{equ:psi4}
                \psi_{1,i}(\boldsymbol{\theta}) = (\IdenMat_{T \times T} \otimes \ObsCov^{\top})\vect(\textbf{B}) - \boldsymbol{\mu}_{y},
            \end{equation}    
            \begin{equation}\label{equ:psi5}
                \psi_{2,i}(\boldsymbol{\theta}) = (\ObsRes - (\IdenMat_{T \times T} \otimes \ObsCov^{\top})\vect(\textbf{B})) \otimes Z_i \ObsCov.
            \end{equation}
        Calculate the Jacobian of $\psi_i(\boldsymbol{\theta})$ and evaluate it at the true value $\boldsymbol{\theta}_0$.
        The first row of Jacobian is
            \begin{equation*}
                \frac{\partial \psi_{1,i}(\boldsymbol{\theta})}{ \partial \boldsymbol{\theta}^{\top} } \biggr\rvert_{\boldsymbol{\theta}_0}  
                = \left[ - \IdenMat_{T \times T}, \IdenMat_{T \times T} \otimes \ObsCov^{\top} \right].
            \end{equation*}        
        The second block of rows is
            \begin{equation*}
                \frac{\partial \psi_{2,i}(\boldsymbol{\theta})}{ \partial \boldsymbol{\theta}^{\top} } \biggr\rvert_{\boldsymbol{\theta}_0} 
                = \left[ \textbf{0}_{Tp \times T},  -\IdenMat_{T \times T} \otimes Z_i\textbf{x}_i\textbf{x}_i^{\top} \right].
            \end{equation*}
        The expected value of Jacobian is 
            \begin{equation*}
                \mathbb{E} \left( \frac{\partial \psi_i(\boldsymbol{\theta})}{ \partial \boldsymbol{\theta}} \biggr\rvert_{\boldsymbol{\theta}_0} \right) 
                    = \begin{bmatrix}
                    -\IdenMat_{T \times T} & \IdenMat_{T \times T} \otimes \boldsymbol{\mu}_x^{\top} \\ 
                    \textbf{0}_{Tp \times T} & -\IdenMat_{T \times T} \otimes \boldsymbol{\Pi}_{p \times p}
                    \end{bmatrix},
            \end{equation*}
        where $\boldsymbol{\Pi} = \mathbb{E}\left[ Z_i \textbf{x}_i\textbf{x}_i^{\top} \right]$. 
        By the M-estimation approach, the asymptotic variance of $\hat{\boldsymbol{\theta}}$ is
            \begin{align*}
                &  \mathbb{E} \left( \frac{\partial \psi_i(\boldsymbol{\theta})}{ \partial \boldsymbol{\theta}} \biggr\rvert_{\boldsymbol{\theta}_0} \right)^{-1} 
                    \mathbb{E} [ \psi_i(\boldsymbol{\theta}_0)\psi_i(\boldsymbol{\theta}_0)^{\top} ]
                    \left\{ \mathbb{E} \left( \frac{\partial \psi_i(\boldsymbol{\theta})}{ \partial \boldsymbol{\theta}} \biggr\rvert_{\boldsymbol{\theta}_0} \right)^{-1} \right\}^{\top} \\
                &= \begin{bmatrix}
                        \textbf{B}^{\top}_0 \boldsymbol{\Sigma}_{x} \textbf{B}_0 + \boldsymbol{\mu}_x^{\top} \boldsymbol{\Pi}^{-1} \boldsymbol{\mu}_x \boldsymbol{\Pi}_{\epsilon} &
                        \boldsymbol{\Sigma}_{\epsilon} \otimes \boldsymbol{\mu}^{\top} \boldsymbol{\Pi}^{-1} \\
                        \boldsymbol{\Sigma}_{\epsilon} \otimes \boldsymbol{\Pi}^{-1} \boldsymbol{\mu}_x & \boldsymbol{\Sigma}_{\epsilon} \otimes \boldsymbol{\Pi}^{-1}
                    \end{bmatrix}.
            \end{align*}
            Then this lemma is complete by the delta method.
    \end{proof}
    Lemma \ref{lemma:finite_DR} states the double robustness property of the DR estimator \cite[e.g.][]{tsiatis2006semiparametric}. 
    However, up to our knowledge, the covariance matrices of the pointwise OR- and DR-estimators in the context of multivariate outcomes do not appear in the literature elsewhere. 
    These covariance matrices are necessary to provide simultaneous inference as presented in Section \ref{sec:feasible}.
\section{Proofs of the tightness lemmas}\label{tight.app} 
    \subsection{Proof of Lemma \ref{lemma:LimRandomElements}}
    \begin{proof}
        Define the $\epsilon$-neighborhood of a subset $\mathcal{S}$ in $\mathcal{H}$ as        
            $$\mathcal{S}^{\epsilon} = \Curlybracket{ f \in \mathcal{H} : \inf \Parentheses{ \Norm{ f - z } : z \in \mathcal{S} } \leq \epsilon}.
            $$
        Let $\Curlybracket{e_j}$ be a complete orthonormal system for the Hilbert space $\mathcal{H}$. 
        Take a finite-dimensional space $\mathcal{S}_J = \Span \Curlybracket{e_1,\dots,e_J}$. 
        Decompose $\xi_n$ as $\xi_{nJ}$ and $\xi_{nJ}'$, the projections on $\mathcal{S}_J$ and the orthogonal complement of $\mathcal{S}_J$, $\mathcal{S}_J^{\perp}$, respectively.
        Then, for any $\epsilon$, if $\xi_n \in \mathcal{S}_J^{\epsilon}$, $\inf\Parentheses{ \Norm{ \xi_n - z} : z \in \mathcal{S}_J } = \Norm{\xi_n - \xi_{nJ}} = \Norm{\xi_{nJ}'} \leq \epsilon$.
        Therefore
            $$
                \Pr\Curlybracket{ \xi_n \in \mathcal{S}_J^{\epsilon} } = \Pr\Curlybracket{ \Norm{\xi_{nJ}'} \leq \epsilon }.
            $$
        Using Chebyshev's inequality 
            \begin{align*}
                \Pr \Curlybracket{ \Norm{\xi_{nJ}'} > \epsilon } &\leq \epsilon^{-2} \mathbb{E} \Parentheses{ \Norm{\xi_{nJ}'}^{2} }.
            \end{align*}
        By independence, zero mean and $\mathbb{E}\Parentheses{\Norm{X_i}^2} < \infty$ assumptions, we have
            \begin{align}\label{eq:ineq}
                \epsilon^{-2} \mathbb{E} \Parentheses{ \Norm{\xi_{nJ}'}^{2} } 
                = \epsilon^{-2} \mathbb{E} \Parentheses{ \Norm{ n^{-1/2}\SumOverN  X_{iJ}'  }^{2} }
                \leq \epsilon^{-2} \max_{i} \Parentheses{\mathbb{E}\Parentheses{ \Norm{X_{iJ}'}^2 }},
           \end{align}        
        where $X_{iJ}'$ is the projection of $X_i$ on $\mathcal{S}_J^{\perp}$.
        Since $\epsilon^{-2}\max_{i} \Parentheses{ \mathbb{E} \Parentheses{ \Norm{X_{iJ}'}^2 } }$ can be arbitrary small if $J$ sufficiently large, $\forall \delta>0$, find a $J_{\epsilon\delta}$, s.t.  
            $$
                \inf_{n\geq 1} \Pr \Parentheses{ \xi_n \in \mathcal{S}_{J_{\epsilon\delta}}^{\epsilon} } \geq 1-\delta.
            $$
        With fixed $J_{\epsilon\delta}$, define $\mathcal{S}^{r}_{J_{\epsilon\delta}} = \Curlybracket{ f \in \mathcal{H}: \max_{j\leq  J_{\epsilon\delta}} |\langle f, e_j \rangle| \leq r}$. We then have
            \begin{align*}
                \Pr \Parentheses{ \xi_n \in \mathcal{S}^{r}_{J_{\epsilon\delta}} } &= 1 - \Pr\Curlybracket{ \max_{j\leq  J_{\epsilon\delta}} |\langle \xi_n, e_j \rangle| > r } \geq 1- \sum_{j}^{J_{\epsilon\delta}} \frac{ \mathbb{E} \langle \xi_n, e_j \rangle ^2 }{r^2} \\
                &= 1 - \frac{ n^{-1}\sum_{j}^{J_{\epsilon\delta}}\SumOverN \mathbb{E} \langle X_i, e_j \rangle ^2 }{r^2} 
                \geq 1 - \frac{ \sum_{j}^{J_{\epsilon\delta}} \max_{i} \Parentheses{\mathbb{E} \langle X_i, e_j \rangle ^2 }}{r^2}.
            \end{align*}
        Let $ r = \sqrt{c/\delta} $ and $c = \sum_{j}^{J_{\epsilon\delta}} \max_{i} \Parentheses{\mathbb{E} \langle X_i, e_j \rangle ^2 }$. Then the proof is complete by Theorem 7.7.4 in \cite{hsing2015theoretical}.
    \end{proof}
    \subsection{Proof of Lemma \ref{lemma:joint}}
    \begin{proof}
        Because of the tightness of $\Curlybracket{\mathcal{P}\circ \Parentheses{X_n | y}^{-1}}_{n\geq1}$, $\forall \epsilon > 0$, $\exists$ compact set $E$ such that $\mathcal{P}\circ \Parentheses{X_n | y}^{-1}\Parentheses{E}\geq 1-\epsilon$.
        By the disintegration representation, we have
        $$
            \mathcal{P}\circ X_n^{-1} \Parentheses{E} = \int_{\mathcal{Y}} \mathcal{P}\circ\Parentheses{X_n|y}^{-1}(E) d \mathcal{P}\circ Y_n^{-1} \geq 1 - \epsilon 
        $$
        Thus $\mathcal{P}\circ X_n^{-1}$ is uniformly tight.
        Similiarly, since $\Curlybracket{\mathcal{P}\circ \Parentheses{X_n | y}^{-1}}_{n\geq1}$ and $\Curlybracket{\mathcal{P}\circ Y_n^{-1}}_{n\geq1}$ are tight, $\forall \epsilon/2 > 0$ and $n$, $\exists$ compact sets $E$ and $F$ s.t. $\mathcal{P}\circ \Parentheses{X_n | y}^{-1}\Parentheses{E}\geq 1-\frac{\epsilon}{2}$ and $\mathcal{P}\circ Y_n^{-1}\Parentheses{F}\geq 1-\frac{\epsilon}{2}$.
        By the disintegration representation, we have
        $$
            \mathbb{P}_n \Parentheses{E \times F} = \int_{F} \mathcal{P}\circ\Parentheses{X_n|y}^{-1}(E) d \mathcal{P}\circ Y_n^{-1} \geq \Parentheses{1-\frac{\epsilon}{2}}^2 > 1 - \epsilon 
        $$
        Then the lemma can be established by clarifying the compactness of $E \times F$. 
        With an arbitrary open cover $\Curlybracket{ \alpha_{\lambda} }_{\lambda \in \Lambda}$ of $E \times F$, for each point $(e,f) \in E \times F$, find a $\lambda_{(e,f)}$ s.t. $(e,f) \in \alpha_{\lambda_{(e,f)}}$. 
        Since $\alpha_{\lambda_{(e,f)}}$ is open, $\exists$ an rectangle $R_{(e,f)} = \mu_{e}^{f} \times \nu_{f}^{e}$, where $\mu_{e}^{f}$ and $\nu_{f}^{e}$ are some open neighbourhoods of $e$ and $f$ respectively, and $R_{(e,f)} \subset \alpha_{\lambda_{(e,f)}}$. 
        For a fixed $e$, $\Curlybracket{\nu_f^e}_{f\in F}$ is an open cover of $F$, and $\exists$ $m$, such that $F \subset \bigcup_{j = 1}^m \nu_{f_j}^e$ by compactness.  
        Since $\Curlybracket{\bigcap_{j = 1}^m \mu_e^{f_j}}_{e\in E}$ is also an open cover of $E$, again, by compactness, $\exists$ $n$ such that $E \subset \bigcup_{i = 1}^n \Parentheses{\bigcap_{j = 1}^m \mu_{e_i}^{f_j}}$.
        Then, $E \times F$ is covered by $\bigcup_{i = 1}^n\bigcup_{j = 1}^m  \Parentheses{\bigcap_{j = 1}^m \mu_{e_i}^{f_j}} \times \nu_{f_j}^{e_i}  \subset \bigcup_{i = 1}^n\bigcup_{j = 1}^m \mu_{e_i}^{f_j} \times \nu_{f_j}^{e_i}\subset \bigcup_{i = 1}^n\bigcup_{j = 1}^m R_{(e_i, f_j)}$.
        Thus we can find a finite number of sets in $\Curlybracket{ \alpha_{\lambda} }_{\lambda \in \Lambda}$ to cover $E \times F$, and $E \times F$ is compact. 
    \end{proof}
\section{Further results from the Monte Carlo study}\label{figure.app}
    We include below simulation results on the OR- and DR-estimators mentioned in the main text, including bias, mean estimated variance as well as Monte Carlo variance, and MSE (combining mean squared bias and Monte Carlo variance).
    We observe low bias when at least one model is correctly specified.  
    Bias and Monte Carlo variability decrease with sample sizes, and estimated variances correspond to the Monte Carlo variants. 
    Furthermore, Figure \ref{fig:bandwidth} shows the average width of the simultaneous and pointwise confidence bands in one specific scenario and at two different sample sizes. As would be expected, the pointwise bands are notably narrower. 

       \begin{figure}[t!]
        \centering
        \includegraphics[width=\linewidth]{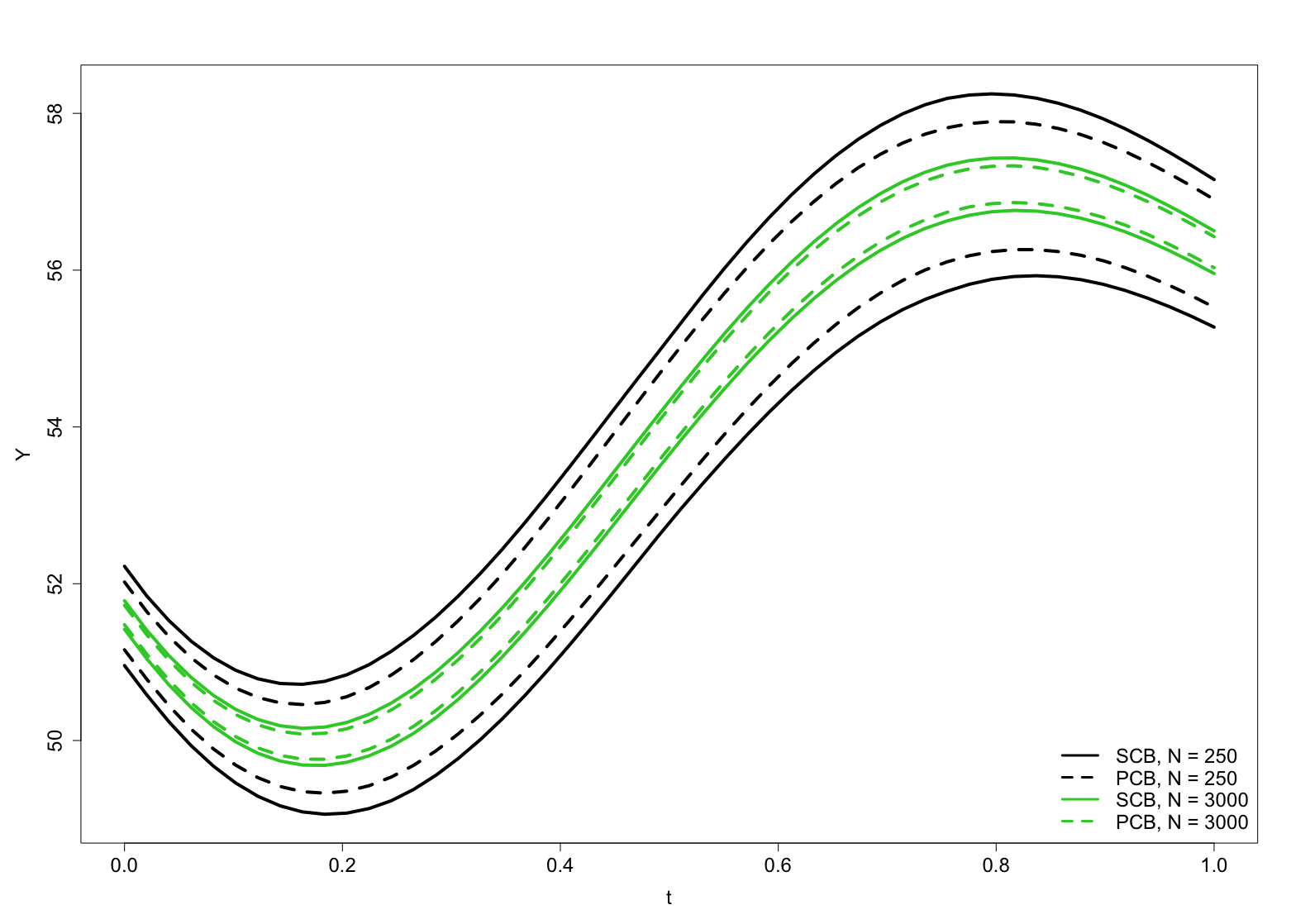}
        \caption{Average SCB (solid) and PCB (dashed) from 1000 simulation replicates at n = 250 (black) and n = 3000 (green). Based on MVN errors for the OR model with no model misspecification.}
        \label{fig:bandwidth}
    \end{figure}
    
    \begin{figure}[htbp]
        \centering
        \begin{minipage}[t]{ 0.8\linewidth}
            \centering
            \includegraphics[width=\linewidth]{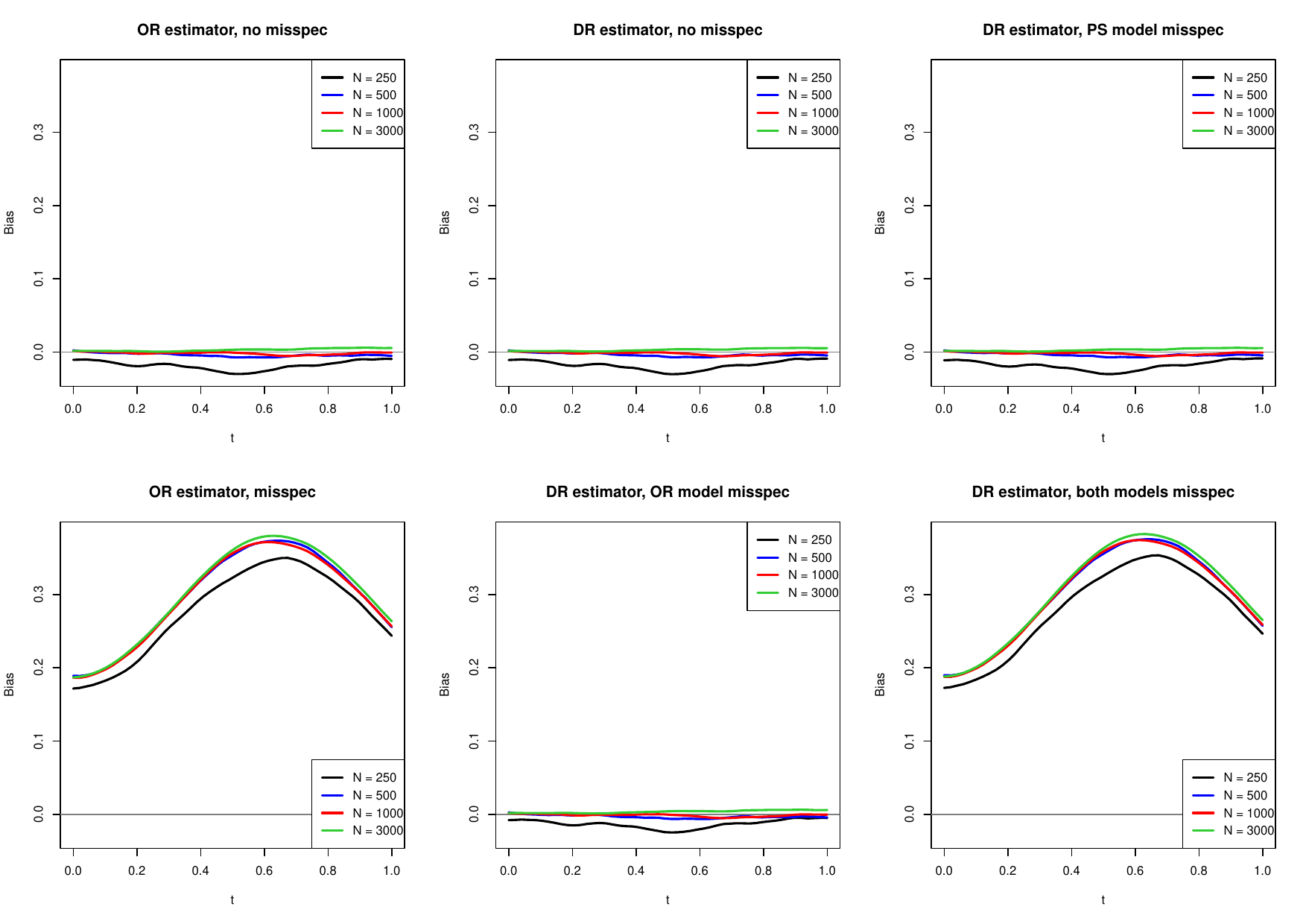}
            \caption{\footnotesize Bias for the Monte Carlo simulations using Gaussian error terms for the OR estimator (top and bottom left panels) and DR estimator (middle and right panels); OR and PS models are outcome and propensity score models. By comparison, the bias of the complete case estimate varies between 0.21 and 1.39.}
        \label{fig:bias_gp}
        \end{minipage}
        \begin{minipage}[t]{ 0.8\linewidth}
            \centering
            \includegraphics[width=\linewidth]{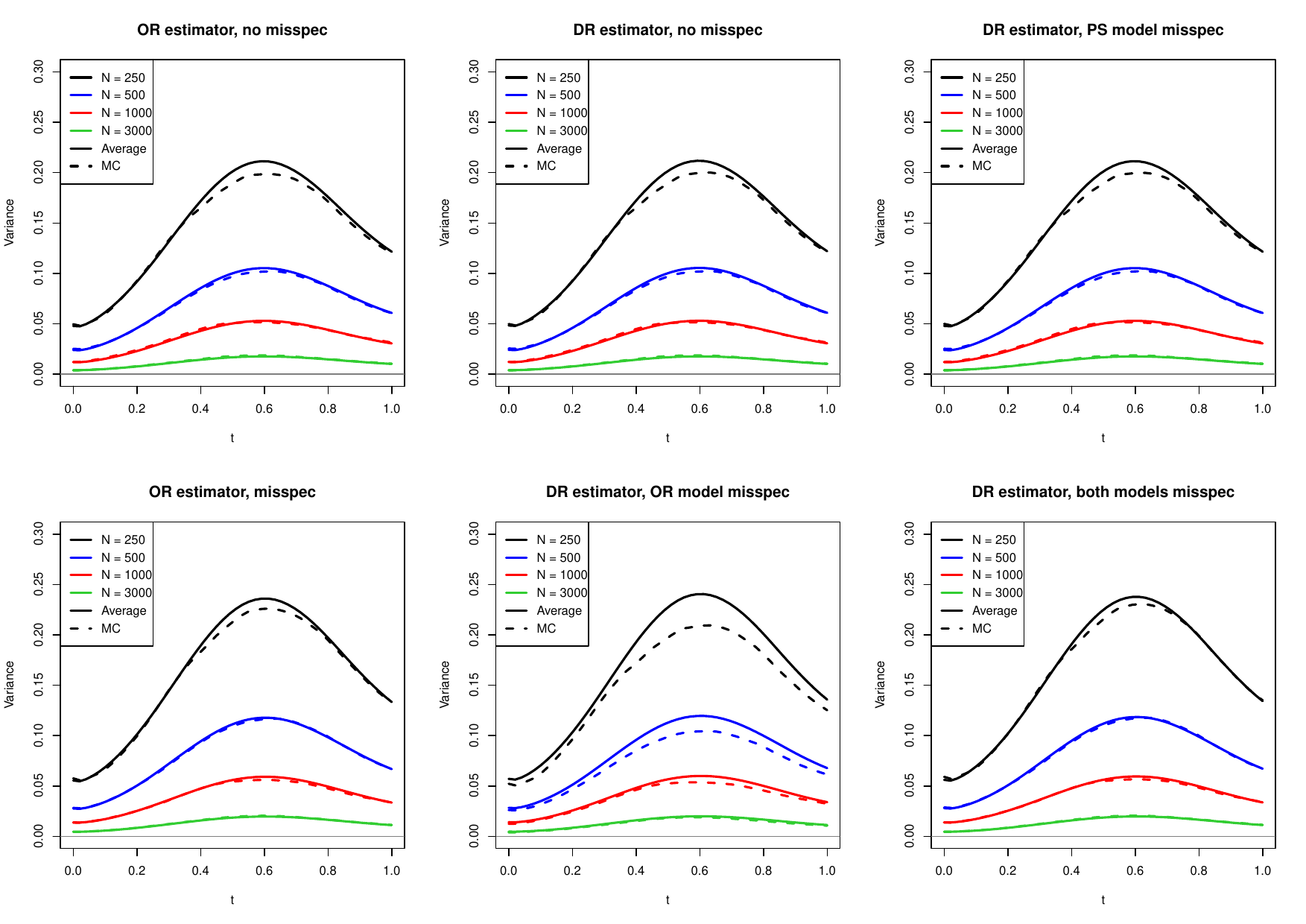}
            \caption{\footnotesize Mean estimated variances (solid lines) and Monte Carlo (MC) variance (dashed lines) for the Monte Carlo simulations using Gaussian error terms. Results for the OR estimator (top and bottom left panels) and DR estimator (middle and right panels); OR and PS models are outcome and propensity score models. By comparison, the results for the complete case estimate are at similar levels.}
        \label{fig:variance_gp}
        \end{minipage}
    \end{figure}

    \begin{figure}[htbp]
        \centering
        \begin{minipage}[t]{ 0.8\linewidth}
            \centering
            \includegraphics[width=\linewidth]{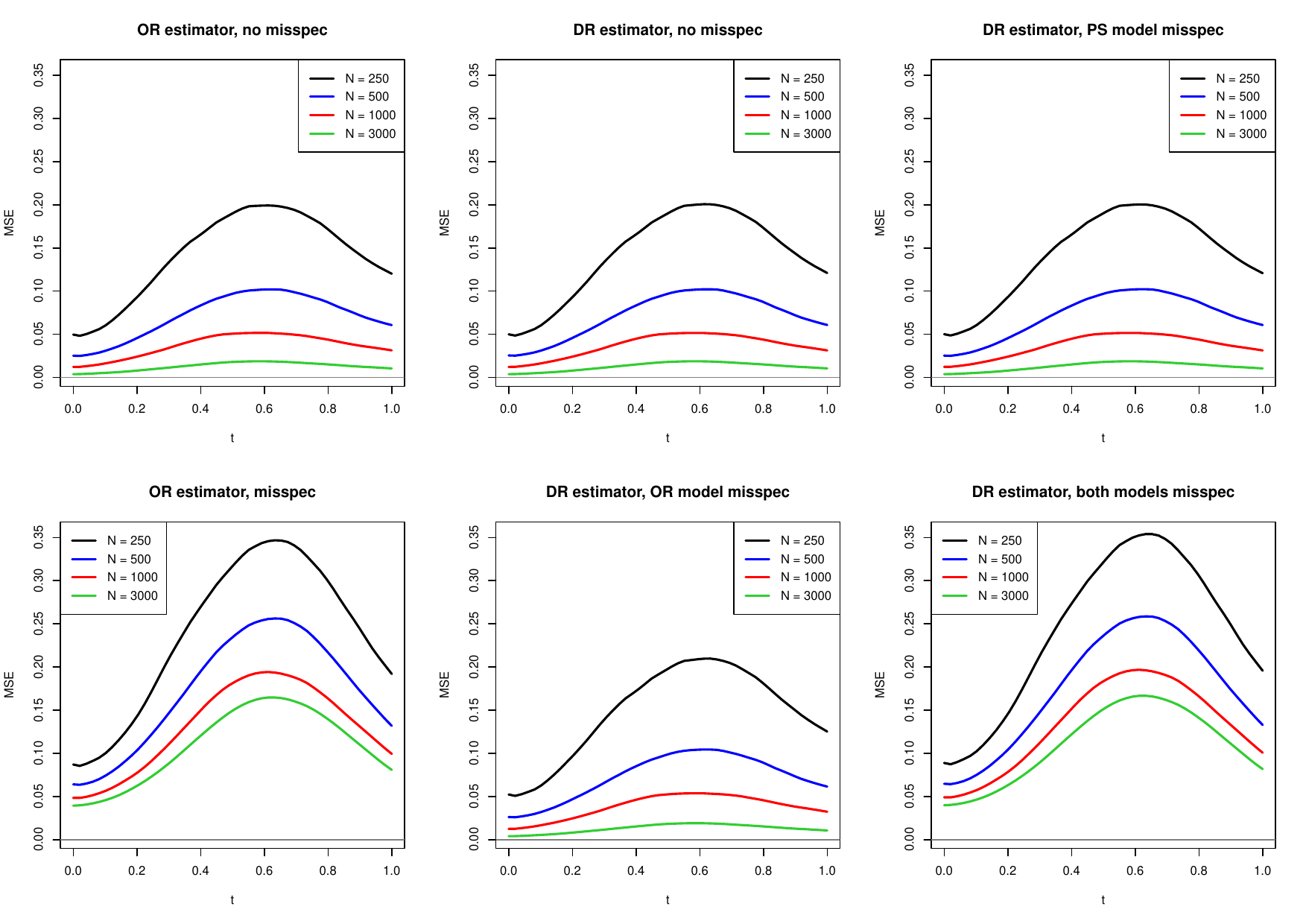}
            \caption{\footnotesize MSE for the Monte Carlo simulations using Gaussian error terms for the OR estimator (top and bottom left panels) and DR estimator (middle and right panels); OR and PS models are outcome and propensity score models. By comparison, the MSE of the complete case estimate varies between 0.06 and 2.12.}
        \label{fig:mse_gp}
        \end{minipage}
        \begin{minipage}[t]{ 0.8\linewidth}
            \centering
            \includegraphics[width=\linewidth]{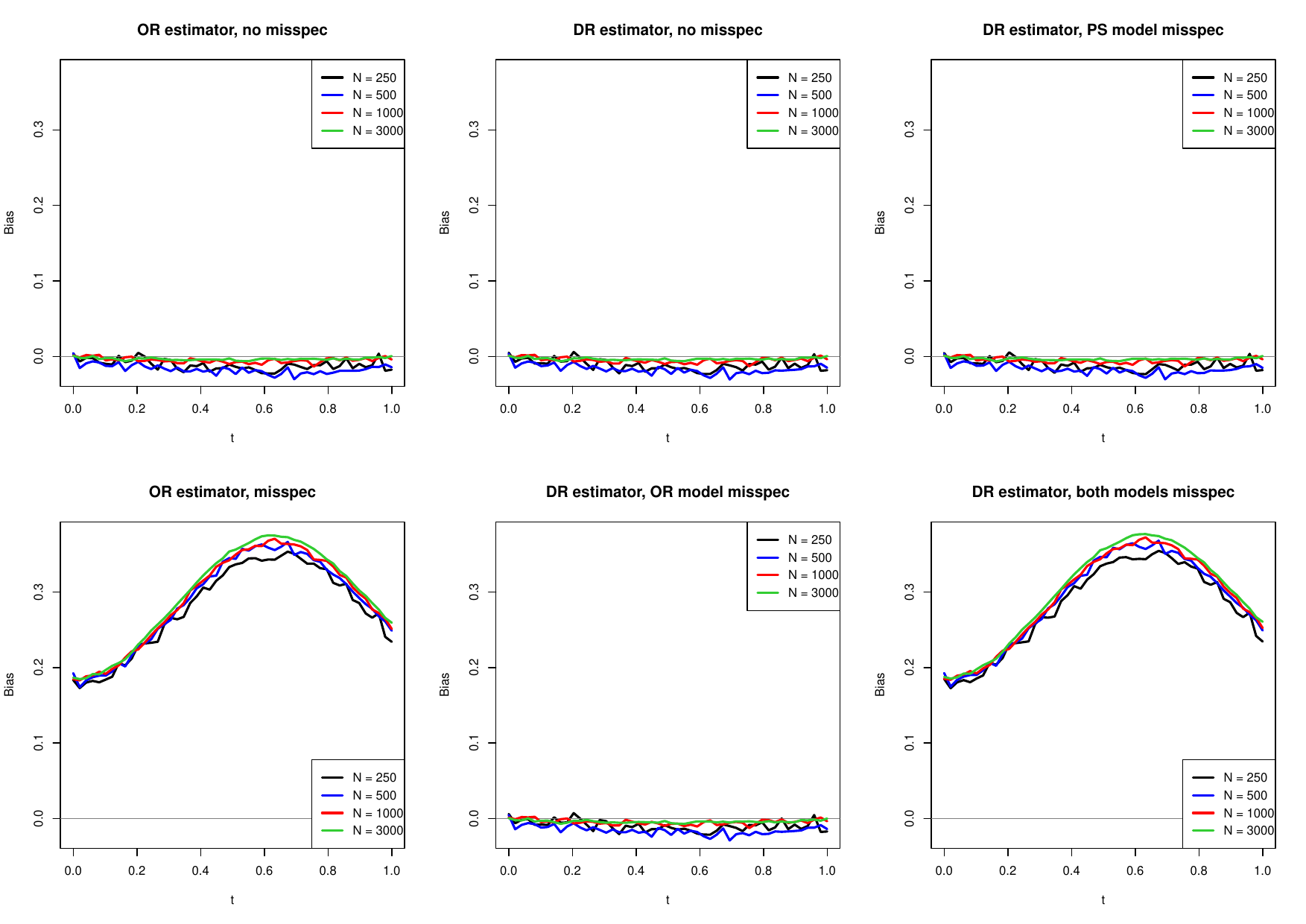}
            \caption{\footnotesize Bias for the Monte Carlo simulations using multivariate t-distributed error terms for the OR estimator (top and bottom left panels) and DR estimator (middle and right panels); OR and PS models are outcome and propensity score models.By comparison, the bias of the complete case estimate varies between 0.22 and 1.39.}
        \label{fig:bias_t}
        \end{minipage}
    \end{figure}    

    \begin{figure}[htbp]
        \centering
        \begin{minipage}[t]{ 0.8\linewidth}
            \centering
            \includegraphics[width=\linewidth]{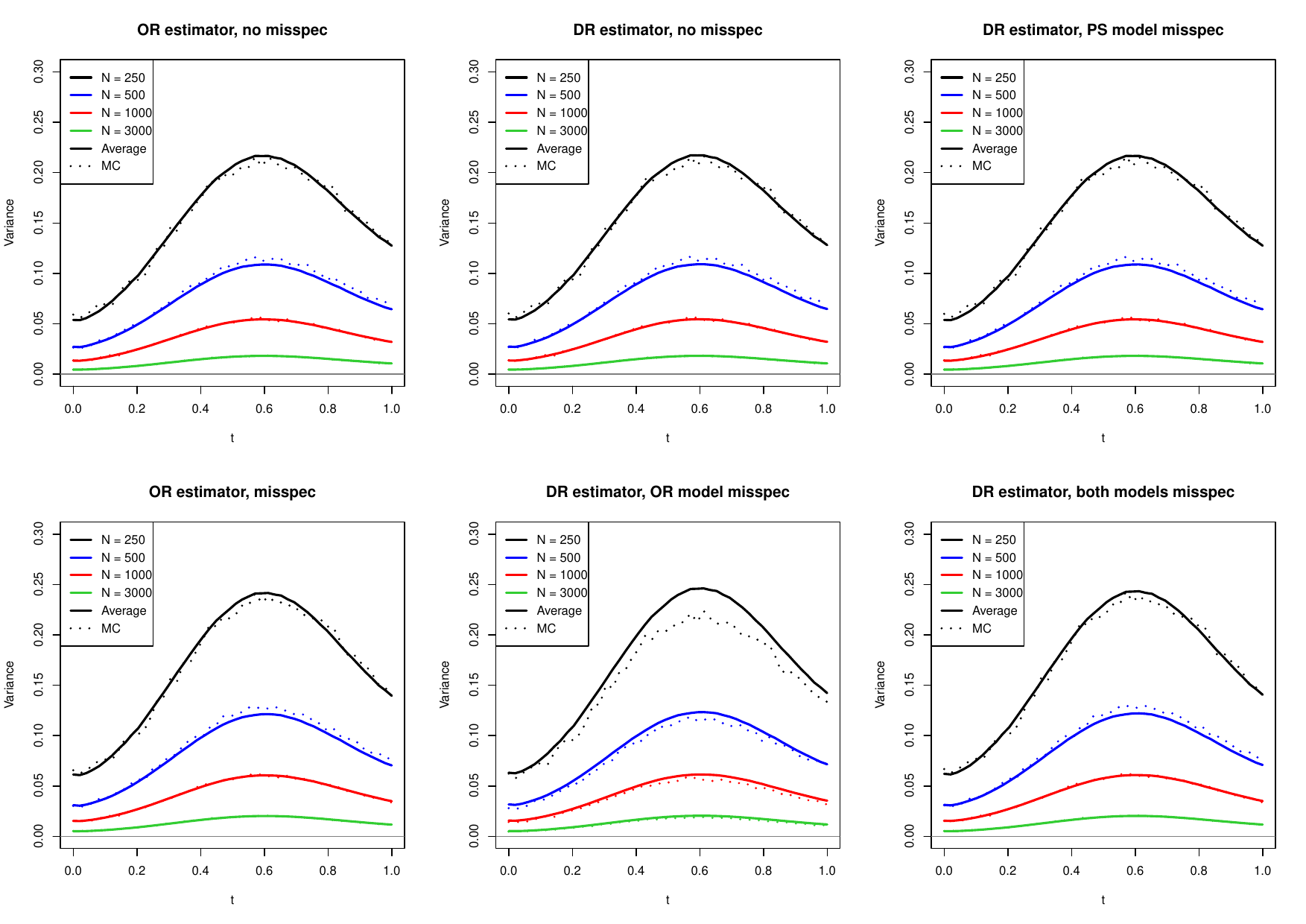}
            \caption{\footnotesize Mean estimated variances (solid lines) and Monte Carlo (MC) variance (dotted lines) for the Monte Carlo simulations using multivariate t-distributed error terms. Results for the OR estimator (top and bottom left panels) and DR estimator (middle and right panels); OR and PS models are outcome and propensity score models. By comparison, the results for the complete case estimate are at similar levels.}
        \label{fig:variance_t}
        \end{minipage}
        \begin{minipage}[t]{ 0.8\linewidth}
            \centering
            \includegraphics[width=\linewidth]{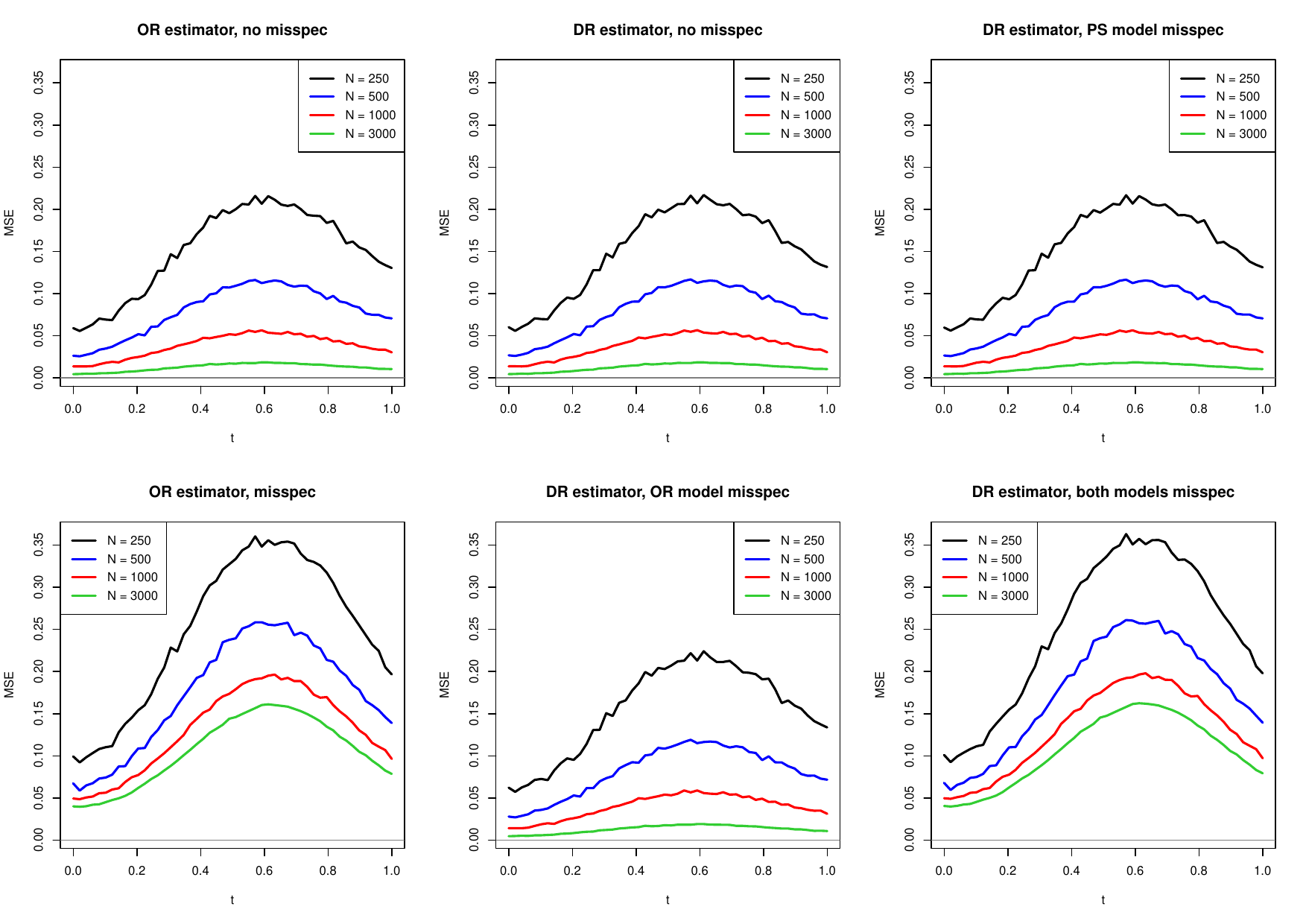}
            \caption{\footnotesize MSE for the Monte Carlo simulations using multivariate t-distributed error terms for the OR estimator (top and bottom left panels) and DR estimator (middle and right panels); OR and PS models are outcome and propensity score models. By comparison, the MSE of the complete case estimate varies between 0.05 and 2.12.}
        \label{fig:mse_t}
        \end{minipage}
    \end{figure} 
    
\end{document}